\newcommand{\q}{\quad}
\newcommand{\ee}{{\rm e}\hspace{1pt}}
\newcommand{\dd}{\hspace{0.5pt}{\rm d}\hspace{0.5pt}}
\journal{}
\numberwithin{equation}{section}
\newtheorem{lemma}[]{Lemma}
\numberwithin{lemma}{section}
\newtheorem{theorem}[]{Theorem}
\numberwithin{theorem}{section}
\newtheorem{example}[]{Example}
\numberwithin{example}{section}
\newcommand{\overbar}[1]{\mkern 1.5mu\overline{\mkern-1.5mu#1\mkern-1.5mu}\mkern 1.5mu}
    \def\ps@pprintTitle{%
      \let\@oddhead\@empty
      \let\@evenhead\@empty
      \let\@oddfoot\@empty
      \let\@evenfoot\@oddfoot
    }
\begin{document}

\begin{frontmatter}

\title{{\bf  Fourth-order two-stage explicit exponential integrators for solving differential equations }}
\author{Vu Thai Luan }
\ead{vluan@ucmerced.edu} 
\address{School of Natural Sciences, University of California, Merced, 5200 North Lake Road,\\
Merced, CA 95343, USA}

\begin{abstract}
\small  
\indent Among the family of fourth-order time integration schemes, the two-stage Gauss--Legendre method, which is an implicit Runge--Kutta method based on collocation, is the only superconvergent. The computational cost of this implicit scheme for large systems, however, is very high since it requires solving a nonlinear system at every step.  Surprisingly, in this work we show that one can construct and prove convergence results for exponential methods of order four which use two stages only. Specifically, we derive two new fourth-order two-stage exponential Rosenbrock schemes for solving large systems of differential equations. Moreover, since the newly schemes are not only superconvergent but also fully explicit, they clearly offer great advantages over the two-stage Gauss--Legendre method as well as other time integration schemes. Numerical experiments are given to demonstrate the efficiency of the new integrators.   
 
\small  
\end{abstract}

\begin{keyword}
\small 
 Exponential integrators \sep exponential Rosenbrock methods  \sep nonstiff probblems \sep stiff problems  \sep superconvergence 
\end{keyword}

\end{frontmatter}
\section{Introduction}
Finding numerical solutions to time-dependent PDEs usually requires the time discretization of large systems of differential equations, which can be cast in the form
\begin{equation} \label{eq1.1}
u'(t)=F(u(t)), \q u(t_0)=u_0.
\end{equation}
Along with the development of numerical analysis, many methods have been designed for solving \eqref{eq1.1} numerically. Depending on the characteristics of each problem, one has to choose the right method. Nonstiff problems are usually integrated by using classical techniques such as explicit Runge--Kutta methods, multistep methods, and general linear methods (see \cite{HNW93}). The situation for stiff problems, however, is much more involved due to the fact that the Jacobian matrix often has a large norm or is even an unbounded operator. In this case, explicit methods have to face with stability issues. They are usually lack stability and are required to use extremely small time steps. To avoid this issue, various kinds of implicit methods have been proposed such as implicit Runge–Kutta methods (e.g., Gauss, Radau IA and IIA), BDF methods, Rosenbrock-type methods, just to name a few. For details of such methods we refer the reader to the excellent monograph \cite{HW96}. The downside of implicit methods, however, is their computational costs that are usually too high caused by solving large nonlinear system at every step. In order to overcome the two mentioned issues of such classical explicit and implicit methods, exponential integrators has been introduced (see the review paper \cite{HO10} for details). This field has grown significantly since 1998 and it has been shown that the integrators are highly competitive, see for example  \cite{HLS98, CM02,  HO05, Tok06, LO12b, LO14a, LO14b, PA14, rainwater14, LO16, LMG16}. High-order exponential integrators for stiff problems have been proposed in \cite{L13}.

In this work, we pay attention to a very completive and good candidate for solving stiff problems, the so-called exponential Rosenbrock methods, which is first proposed in \cite{HO06}. The idea  is, first to make a continuous linearization of the vector field $F$ along the numerical solution $u_n$  of \eqref{eq1.1} (due to Pope \cite{P63}) leading to semilinear problems
\begin{equation} \label{eq1.2}
u'(t)=J_n u(t)+ g_n(u(t))
\end{equation}
with the Jacobian $J_n$ and the nonlinearity $g_n$ are
\begin{equation} \label{eq1.3}
J_n=\frac{\partial F}{\partial u}(u_n), \q  g_n (u)=F(u)-J_{n} u,
\end{equation}
and then to apply exponential Runge--Kutta methods \cite{HO05} to \eqref{eq1.2} which resulted in exponential Rosenbrock methods. They have been studied intensively in a series of papers \cite{HO06, HOS09, LO14a, LO16}. Methods up to order 6 have been derived in \cite{LO16} and the stiff order conditions for methods up to arbitrary order are given in \cite{LO13}. One of the great advantages of  exponential Rosenbrock methods is that the Jacobian of the nonlinearities $g_n (u)$ vanishes at the numerical solution $u_n$ (see \eqref{eq2.5}). This improves the stability, simplifies the stiff order conditions, and thus allows one to construct high-order methods with a few stages only. For instance, we mention the class of 1-stage method of order 2 (considered as a superconvergence scheme), namely the exponential Rosenbrock-Euler method (see \cite{HOS09}) and the class of 3-stage method of order 5, $\mathtt{exprb53s3}$ (see \cite{LO14a}).

Our aim in this paper is to construct, analyze, and implement a class of fourth-order 2-stage explicit exponential Rosenbrock methods. 
This is motivated by the fact a 3-stage method  can get the maximum order $p=5$, see \cite{LO14a}. Moreover, it is not clear on the existence of a 2-stage method which has superconvergence property (order 4) both in the context of nonstiff and stiff problems.  By taking further investigate order conditions both in the classical and stiff sense, we will show that one can construct such methods.

The remainder of the paper is organized as follows. In Section~\ref{sc2}, we recall the exponential Rosenbrock schemes (including 2-stage methods) and present our motivation to this work. The construction of classical fourth-order 2-stage schemes is given  in Section~\ref{sc3}, where we give the classical order conditions (Lemma~\ref{lm2.1}), show the convergence result (Theorem~\ref{theorem2.1}), and derive the scheme $\mathtt{exprb42N}$ (see \eqref{eq2.20}). Inspired by these results, in Section~\ref{sc4} we show that, under the regularity assumptions on the problem, it is even possible to construct a stiffly accurate fourth-order 2-stage scheme. The main results of this section are Lemma~\ref{lm4.1} (stiff order conditions), Theorem~\ref{th4.1} (convergence), and the scheme $\mathtt{exprb42}$ (see \eqref{eq4.12}). Section~\ref{sc5} discusses variable stepsizes implementation for the two newly constructed integrators. Finally, in Section~\ref{sc6}  we verify the convergence results and show the efficiency of the two new integrators on a set of numerical examples.
\section{Numerical method and motivation}\label{sc2}
We start off by recalling the explicit exponential Rosenbrock-type methods for solving  \eqref{eq1.1}, see \cite{HOS09}:
\begin{subequations} \label{eq1.4}
\begin{align}
U_{ni}&= u_n + c_i h \varphi _{1} ( c_i h J_n)F(u_n) +h \sum_{j=2}^{i-1}a_{ij}(h J_n) D_{nj}, \label{eq1.4a} \\
u_{n+1}& = u_n + h \varphi _{1} ( h J_n)F(u_n) + h \sum_{i=2}^{s}b_{i}(h J_n) D_{ni}  \label{eq1.4b}
\end{align}
\end{subequations}
with
$
  D_{ni}= g_n ( U_{ni})- g_n(u_n ), \  2\leq i\leq s.
$
Here  $u_n \approx u(t_n)$, $c_i $ are the nodes, $s$ is the number of stages, $U_{ni}\approx u(t_n +c_i h)$, $J_n$ and $g_n$ are given in~\eqref{eq1.3}, $h = t_{n+1}-t_n >0$ denotes the time step. The coefficients $a_{ij}(z)$ and $b_i (z)$ are usually chosen as linear combinations of the corresponding entire functions $\varphi_{k}(c_i z)$ and $\varphi_{k}(z)$, where
\begin{equation} \label{eq1.5}
\varphi_0(z) = \ee^z,\qquad \varphi _{k}(z)=\int_{0}^{1} \ee^{(1-\theta )z} \frac{\theta ^{k-1}}{(k-1)!}\dd\theta , \quad k\geq 1.
\end{equation}
These functions satisfy the relation
\begin{equation} \label{eq1.6}
\varphi _{k}(z)=\frac{1}{k!}+z\varphi _{k+1}(z), \q k\geq 0.
\end{equation}

So far, it is known that the one-stage ($s=1$) second-order method, the so-called exponential Rosenbrock-Euler method
 \begin{equation} \label{eq1.7}
u_{n+1}= u_n + h \varphi _{1} ( h J_n)F(u_n),
\end{equation}
is the only superconvergent exponential integrator. 
It is shown in \cite{HOS09} that the 2-stage schemes, which read as
\begin{subequations} \label{eq2.1}
\begin{align}
 U_{n2}&= u_n + c_2 h \varphi _{1} ( c_2 h J_n)F(u_n), \label{eq2.1a} \\
u_{n+1}& = u_n + h \varphi _{1} ( h J_n)F(u_n) + h b_{2}(h J_n) (g_n (U_{n2})- g_n(u_n )),   \label{eq2.1b}
\end{align}
\end{subequations}
can attain third-order accuracy, see for example the scheme $\mathtt{exprb32}$ given in \cite{HOS09}. Obviously, from this one can easily derive a corresponding classical third-order scheme. However, the question whether or not a 2-stage method of order 4 exists is still open. On the other hand, in \cite{LO14a} it is shown that a 3-stage method can reach the maximum order $p=5$. Therefore, our aim in this work is to answer the question of superconvergence for the class of 2-stage methods.
It is thus important to further investigate order conditions for such a 2-stage scheme \eqref{eq2.1}, both in the classical and stiffly accurate situation. For this purpose, our idea is to analyze local errors directly as done in \cite{LO14a}. Namely, we will study one step integration scheme \eqref{eq2.1} with the initial values on the exact solution $\tilde{u}_n=u(t_n)$, i.e.
\begin{subequations} \label{eq2.2}
\begin{align}
\overbar{U}_{n2}&=\tilde{u}_n + c_2 h \varphi _{1} (c_2 h \tilde{J}_n)F(\tilde{u}_n), \label{eq2.2a}\\
\overbar{u}_{n+1}&= \tilde{u}_n + h \varphi _{1} (h \tilde{J}_n) F(\tilde{u}_n) + h b_{2}(h \tilde{J}_n) (\tilde{g}_n(\overbar{U}_{n2})-\tilde{g}_n(\tilde{u}_n). \label{eq2.2b}
\end{align}
\end{subequations}
Similarly to \eqref{eq1.3}, here
\begin{equation} \label{eq2.2add} 
 \tilde{J}_n=\dfrac{\partial F }{ \partial u} (\tilde{u}_n), \q \tilde{g}_n(u)=F(u)-\tilde{J}_n u, 
 \end{equation}
which are resulted from the linearization of \eqref{eq1.1} at $\tilde{u}_n$, i.e. 
\begin{equation} \label{eq2.3} 
u'(t)=\tilde{J}_n u(t)+ \tilde{g}_n(u(t)).
\end{equation}
Let
\begin{equation} \label{eq2.3add}
\tilde{e}_{n+1}=\bar{u}_{n+1}- \tilde{u}_{n+1}
\end{equation}
denote the local error, i.e., the error of the numerical solution after one step with initial value on the exact solution $\tilde{u}_{n}$. Since the structure of \eqref{eq2.2} allows to treat the linear part of \eqref{eq1.2} exactly (see Remark~1 below) and the fact that 
\begin{equation} \label{eq2.5} 
\frac{\partial \tilde{g}_n}{ \partial u}(\tilde{u}_n)=\frac{\partial }{ \partial u}(F(u)- \tilde{J}_n u)(\tilde{u}_n)=\tilde{J}_n-\tilde{J}_n=0,
\end{equation}
it is hoped that one can further simplify order conditions and derive from that the right coefficient $b_{2}(h \tilde{J}_n)$, which gives order of consistency 5 for  the local error, i.e. $\tilde{e}_{n+1}=\mathcal{O}(h^{5})$. 

For the remaining of the paper, we will focus on both cases: nonstiff and stiff problems. Our analysis will be performed in a Banach space $X$ with norm $\| \cdot \|$. 
 
\section{Construction of classical fourth-order 2-stage exponential Rosenbrock schemes}\label{sc3}
In this section we consider the case where the vector field $F(u)$ is a nonlinear function with a moderate Lipschitz constant. In other words, the problem \eqref{eq1.1} is supposed to be nonstiff.  We thus can make use of the following assumption.

{\em Assumption 1. Suppose that \eqref{eq1.1} possesses a sufficiently smooth solution $u: [0, T]\rightarrow X$, with derivatives in $X$ and that  $F: X \rightarrow X$ is sufficiently often Fr\'echet differentiable in a strip along the exact solution. All occurring derivatives are assumed to be bounded.} 

Clearly, under this assumption, $g_n(u)=F(u)-J_n u$ is also sufficiently often Fr\'echet differentiable as well as satisfies the Lipschitz condition in a strip along the exact solution.

We note for later use that under Assumption~1 one can expand $\varphi _{1} (c_2 h \tilde{J}_n)$ (by using the recurrence relation \eqref{eq1.6}) and $b_{2}(h \tilde{J}_n)$ appearing in \eqref{eq2.2} as
\begin{subequations} \label{eq2.4}
\begin{align}
\varphi _{1} (c_2 h \tilde{J}_n)&=\sum_{k\geq 0} \frac{(c_2 h \tilde{J}_n)^{k}}{(k+1)!}=I+\frac{1}{2!}c_2 h \tilde{J}_n + \frac{1}{3!}c^2_2 h^2 \tilde{J}^2_n +\mathcal{O}(h^3), \label{eq2.4a} \\
b_{2}(h \tilde{J}_n)&=\sum_{k\geq 0}\beta_k (h \tilde{J}_n)^{k}=\beta_0 I+ \beta_1 h \tilde{J}_n  +\mathcal{O}(h^2) . \label{eq2.4b}
\end{align}
\end{subequations}
We now derive an expansion of the numerical solution $\overbar{u}_{n+1}$.
\subsection{Expansion of the numerical solution}
Let  $\tilde{u}'_n, \tilde{u}''_n, \tilde{u}'''_n$ denote the first, second, and third derivative of the exact solution  $u(t)$ of \eqref{eq1.1} 
evaluated at time $t_n$, respectively. We further denote $\dfrac{\partial \tilde{g}_n}{ \partial u}(u), \dfrac{\partial^2 \tilde{g}_n}{ \partial u^2}(u),$ and  $\dfrac{\partial^3 \tilde{g}_n}{ \partial u^3}(u)$  by $\tilde{g}'_n(u), \tilde{g}''_n(u),$ and $\tilde{g}^{(3)}_n(u)$, respectively. 

By using \eqref{eq2.5} 
and differentiating the equation \eqref{eq2.3} twice, we obtain  
\begin{equation} \label{eq2.6} 
\tilde{J}_n \tilde{u}'_n=\tilde{u}''_n, \q \tilde{J}^2_n \tilde{u}'_n=\tilde{u}'''_n-\tilde{g}''_n(\tilde{u}_n)(\tilde{u}'_n, \tilde{u}'_n).
\end{equation}
Inserting $F(\tilde{u}_n)= \tilde{u}'_n$ and \eqref{eq2.4a} into \eqref{eq2.2a} with the help of the identities in  \eqref{eq2.6} gives 
\begin{equation} \label{eq2.7}
\overbar{U}_{n2}=\tilde{u}_n+c_2 h \tilde{u}'_n+ \frac{1}{2!}c^2_2 h^2  \tilde{u}''_n+ \frac{1}{3!}c^3_2 h^3  \big(\tilde{u}'''_n-\tilde{g}''_n(\tilde{u}_n)(\tilde{u}'_n, \tilde{u}'_n)\big)+ \mathcal{O}(h^4).
\end{equation}
By employing \eqref{eq2.7} and \eqref{eq2.5}, we next expand  $\tilde{g}_n(\overbar{U}_{n2})$ in a Taylor series at $\tilde{u}_n$ to get
\begin{equation} \label{eq2.8}
\begin{aligned}
\tilde{g}_n(\overbar{U}_{n2})-\tilde{g}_n(\tilde{u}_n)=&\frac{1}{2!}c^2_2 h^2 \tilde{g}''_n(\tilde{u}_n)(\tilde{u}'_n, \tilde{u}'_n)+\frac{1}{3!}c^3_2 h^3  \big( \tilde{g}^{(3)}_n(\tilde{u}_n)(\tilde{u}'_n, \tilde{u}'_n, \tilde{u}'_n)\\
 +&3 \tilde{g}''_n(\tilde{u}_n)(\tilde{u}'_n, \tilde{u}''_n) \big) +\mathcal{O}(h^4). 
\end{aligned}
\end{equation}
Inserting \eqref{eq2.4b} and \eqref{eq2.8} into \eqref{eq2.2b} yields the following expansion 
\begin{equation} \label{eq2.9}
\begin{aligned}
\overbar{u}_{n+1}&= \tilde{u}_n + h \varphi _{1} (h \tilde{J}_n) F(\tilde{u}_n) +h^3\frac{1}{2!}\beta_0 c^2_2  \tilde{g}''_n(\tilde{u}_n)(\tilde{u}'_n, \tilde{u}'_n)\\
&+ h^4 \Big( \frac{1}{3!}\beta_0 c^3_2  \big( \tilde{g}^{(3)}_n(\tilde{u}_n)(\tilde{u}'_n, \tilde{u}'_n, \tilde{u}'_n)+3 \tilde{g}''_n(\tilde{u}_n)(\tilde{u}'_n, \tilde{u}''_n) \big) +\frac{1}{2!}\beta_1 c^2_2 \tilde{J}_n \tilde{g}''_n(\tilde{u}_n)(\tilde{u}'_n, \tilde{u}'_n) \Big)\\
& +\mathcal{O}(h^5). 
\end{aligned}
\end{equation}
\emph{Remark~1.} It can be seen from the expansion of the numerical solution in \eqref{eq2.9} that we do not expand the term $\varphi _{1} (h \tilde{J}_n)$ in a power series of $h \tilde{J}_n$ as done for $\varphi _{1} (c_2 h \tilde{J}_n)$  in \eqref{eq2.4a}. The reason for keeping that term is because the sum of the first two terms in \eqref{eq2.9} can be rewritten, by using the fact that $\varphi _{1}(z)=(e^z-1)/z$ and $F(\tilde{u}_n)=\tilde{J}_n \tilde{u}_n+\tilde{g}_n (\tilde{u}_n)$, as 
\begin{equation} \label{eq2.9add}
 \tilde{u}_n + h \varphi _{1} (h \tilde{J}_n) F(\tilde{u}_n)=\ee^{h \tilde{J}_{n}}\tilde{u}_n+h\varphi _{1} (h \tilde{J}_n)\tilde{g}_n (\tilde{u}_n) 
\end{equation}
 which can be  used to treat the linear part of \eqref{eq1.2} exactly as seen in the expansion of the exact solution as follows. 

\subsection{Expansion of the exact solution}
Expressing the exact solution of (\ref{eq1.2}) at time $t_{n+1}$ by the variation-of-constants formula gives
\begin{equation} \label{eq2.10}
\tilde{u}_{n+1}=u(t_{n+1})=\ee^{h \tilde{J}_{n}}\tilde{u}_n+h \int_{0}^{1} \ee^{(1-\theta )h \tilde{J}_{n}} \tilde{g}_n(u(t_n +\theta h))\dd\theta
\end{equation}
which can be rewritten as 
\begin{equation} \label{eq2.11}
\tilde{u}_{n+1}=\ee^{h \tilde{J}_{n}}\tilde{u}_n+h \int_{0}^{1} \ee^{(1-\theta )h \tilde{J}_{n}} \tilde{g}_n(\tilde{u}_n)\dd\theta+ h \int_{0}^{1} \ee^{(1-\theta )h \tilde{J}_{n}} \big( \tilde{g}_n(u(t_n +\theta h))-\tilde{g}_n(\tilde{u}_n) \big) \dd\theta.
\end{equation}
One can realize that the sum of the first two terms of \eqref{eq2.11} is exactly equal to \eqref{eq2.9add}. Next, by employing \eqref{eq2.5} we expand $\tilde{g}_n(u(t_n +\theta h)$ in a Taylor series at $\tilde{u}_n$ and insert the obtained results into the third term of  \eqref{eq2.11} as done in \cite[Sec. 3.2]{LO14a},  which finally gives
\begin{equation}  \label{eq2.12}
\begin{aligned}
\tilde{u}_{n+1}=\tilde{u}_n &+  h  \varphi _{1} ( h \tilde{J}_{n}) F(\tilde{u}_n)+h^{3} \varphi _{3} ( h \tilde{J}_{n})  \tilde{g}''_n(\tilde{u}_n)(\tilde{u}'_n,\tilde{u}'_n)\\
&+ h^{4}  \varphi _{4} ( h \tilde{J}_{n}) \big( \tilde{g}^{(3)}_n(\tilde{u}_n)(\tilde{u}'_n, \tilde{u}'_n, \tilde{u}'_n)+3 \tilde{g}''_n(\tilde{u}_n)(\tilde{u}'_n, \tilde{u}''_n) \big)+\mathcal{O}(h^{5}).
\end{aligned}
\end{equation}
We now insert the following expansions 
\begin{equation}  \label{eq2.13}
\begin{aligned}
\varphi_{3} ( h \tilde{J}_{n})&=\frac{1}{3!}I+\frac{1}{4!} h\tilde{J}_{n} +  (h\tilde{J}_{n})^2 \varphi_{5} ( h \tilde{J}_{n})\\
\varphi_{4} ( h \tilde{J}_{n})&=\frac{1}{4!}I+(h\tilde{J}_{n}) \varphi_{5} ( h \tilde{J}_{n})
\end{aligned}
\end{equation}
obtained by using \eqref{eq1.6} into \eqref{eq2.12} to get 
\begin{equation}  \label{eq2.14}
\begin{aligned}
\tilde{u}_{n+1}=\tilde{u}_n &+  h  \varphi _{1} ( h \tilde{J}_{n}) F(\tilde{u}_n)+h^{3} \frac{1}{3!} \tilde{g}''_n(\tilde{u}_n)(\tilde{u}'_n,\tilde{u}'_n)\\
&+ h^{4} \frac{1}{4!} \Big( \tilde{g}^{(3)}_n(\tilde{u}_n)(\tilde{u}'_n, \tilde{u}'_n, \tilde{u}'_n)+3 \tilde{g}''_n(\tilde{u}_n)(\tilde{u}'_n, \tilde{u}''_n)+\tilde{J}_{n}\tilde{g}''_n(\tilde{u}_n)(\tilde{u}'_n,\tilde{u}'_n)  \Big)+\mathcal{O}(h^{5}).
\end{aligned}
\end{equation}
With this expansion of the exact solution at hand, we are now ready to derive (classical) order conditions for 2-stage methods of order 4.
\subsection{Local error and order conditions for fourth-order 2-stage methods} 
By subtracting \eqref{eq2.14} from \eqref{eq2.9}, it is straightforward to derive the following result for the local error $\tilde{e}_{n+1}$.
\begin{lemma} \label{lm2.1}
Under Assumption 1, an explicit 2-stage exponential Rosenbrock scheme \eqref{eq2.1} in which the coefficient $b_2(h J_n)$ can be expanded as \eqref{eq2.4b}, has order of consistency five, i.e. the local error $\tilde{e}_{n+1}=\mathcal{O}(h^{5})$ if the following order conditions are fulfilled 
\begin{equation} \label{eq2.16}
\frac{\beta_0 c^2_2}{2!}=\frac{1}{3!}, \q \frac{\beta_0 c^3_2}{3!}=\frac{1}{4!}, \q \frac{\beta_1 c^2_2}{2!}=\frac{1}{4!},
\end{equation}
that is equivalent to  
\begin{equation} \label{eq2.17}
c_2=\frac{3}{4}, \q \beta_0=\frac{16}{27}, \q \beta_1=\frac{4}{27}.
\end{equation}
Here the remainder term of $\tilde{e}_{n+1}$, which is hidden behind the Landau notation $\mathcal{O}(\cdot)$, is bounded by $Ch^5$ with a constant $C$ that depends on $\| \tilde{J}_{n}\|$.
\qed
\end{lemma}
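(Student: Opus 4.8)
The plan is to obtain $\tilde{e}_{n+1}$ by directly subtracting the exact-solution expansion \eqref{eq2.14} from the numerical-solution expansion \eqref{eq2.9}, both of which are already in hand. First I would observe that the two leading contributions, namely $\tilde{u}_n$ and $h\varphi_1(h\tilde{J}_n)F(\tilde{u}_n)$, are \emph{identical} in the two expansions and therefore cancel exactly; this is precisely the structural feature pointed out in Remark~1, reflecting that the linear part of \eqref{eq1.2} is integrated exactly, and it is what makes an $\mathcal{O}(h^5)$ local error attainable with only two stages. After this cancellation the local error reduces to a sum of terms of orders $h^3$ and $h^4$ (plus the $\mathcal{O}(h^5)$ tails), each carrying one of the elementary differentials $\tilde{g}''_n(\tilde{u}_n)(\tilde{u}'_n,\tilde{u}'_n)$, $\tilde{g}^{(3)}_n(\tilde{u}_n)(\tilde{u}'_n,\tilde{u}'_n,\tilde{u}'_n)$, $\tilde{g}''_n(\tilde{u}_n)(\tilde{u}'_n,\tilde{u}''_n)$, and $\tilde{J}_n\tilde{g}''_n(\tilde{u}_n)(\tilde{u}'_n,\tilde{u}'_n)$.

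Next I would match coefficients order by order. Since these elementary differentials are in general linearly independent (the identity must hold for all sufficiently smooth $F$ admissible under Assumption~1), annihilating $\tilde{e}_{n+1}$ through $\mathcal{O}(h^4)$ forces each coefficient to vanish separately. At order $h^3$ this produces the single condition $\tfrac{1}{2!}\beta_0 c_2^2 = \tfrac{1}{3!}$. At order $h^4$ the coefficient of $\tilde{g}^{(3)}_n$ yields $\tfrac{1}{3!}\beta_0 c_2^3 = \tfrac{1}{4!}$, the coefficient of $\tilde{J}_n\tilde{g}''_n$ yields $\tfrac{1}{2!}\beta_1 c_2^2 = \tfrac{1}{4!}$, while the coefficient of $\tilde{g}''_n(\tilde{u}'_n,\tilde{u}''_n)$ merely reproduces (up to the common factor $3$) the condition already obtained from $\tilde{g}^{(3)}_n$ and is therefore redundant. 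These are exactly the three conditions in \eqref{eq2.16}.

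It then remains to solve this small algebraic system. Dividing the second condition in \eqref{eq2.16} by the first eliminates $\beta_0$ and gives $c_2 = \tfrac{3}{4}$; back-substitution into the first and third conditions yields $\beta_0 = \tfrac{16}{27}$ and $\beta_1 = \tfrac{4}{27}$, establishing the equivalence with \eqref{eq2.17}. Finally, for the remainder estimate I would track the tails of the Taylor expansions of $\tilde{g}_n$ and of the $\varphi$-functions entering \eqref{eq2.9} and \eqref{eq2.14}: under Assumption~1 all occurring Fr\'echet derivatives and solution derivatives are bounded, and the leftover $h^5$-terms are premultiplied by bounded combinations of $\varphi_k(h\tilde{J}_n)$ and of powers of $\tilde{J}_n$, so the hidden constant $C$ depends only on $\|\tilde{J}_n\|$ and these fixed bounds.

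I would finally flag what I expect to be the only delicate point. The present computation is essentially routine once \eqref{eq2.9} and \eqref{eq2.14} are available; the genuinely substantive work sits upstream in deriving those two expansions — in particular in keeping $\varphi_1(h\tilde{J}_n)$ \emph{unexpanded} so that the leading terms coincide and cancel. The single step here that requires care is justifying that coefficient matching is legitimate, i.e. that the listed elementary differentials cannot conspire to cancel one another, and then recognizing the redundancy among the $h^4$ conditions. It is this redundancy that reduces the seemingly over-determined system of four coefficient equations to a consistent system of three equations in the three unknowns $c_2,\beta_0,\beta_1$ admitting the unique solution \eqref{eq2.17}.
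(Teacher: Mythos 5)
Your proposal is correct and takes essentially the same approach as the paper, which obtains the lemma exactly by subtracting the exact-solution expansion \eqref{eq2.14} from the numerical-solution expansion \eqref{eq2.9} (the terms $\tilde{u}_n + h\varphi_1(h\tilde{J}_n)F(\tilde{u}_n)$ cancelling as noted in Remark~1) and reading off the coefficient conditions \eqref{eq2.16}, then solving them to get \eqref{eq2.17}. Your extra care about the linear independence of the elementary differentials and the redundancy of the $\tilde{g}''_n(\tilde{u}'_n,\tilde{u}''_n)$ condition concerns only the necessity direction, which the lemma (stated as an ``if'') does not require; for sufficiency one simply substitutes \eqref{eq2.16} and observes that the $h^3$ and $h^4$ terms vanish, with the $\mathcal{O}(h^5)$ remainder bounded via Assumption~1 exactly as you describe.
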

The result of Lemma~\ref{lm2.1} implies that the coefficient $b_2(h J_n)$ of \eqref{eq2.1} must satisfy the following expansion
\begin{equation} \label{eq2.18}
b_2 (h J_n)=\frac{16}{27}I +\frac{4}{27} h J_n + \mathcal{O}(h^2).
\end{equation}
in order for the method to have order of consistency five.
\subsection{Convergence result}
In the following we show that such a scheme \eqref{eq2.1} that takes $c_2=3/4$ and fulfills \eqref{eq2.18} is indeed convergent with a global error of order 4.
\begin{theorem}\label{theorem2.1}
Let the initial value problem \eqref{eq1.1} satisfies Assumption 1. Consider for its numerical solution  an explicit 2-stage  exponential Rosenbrock scheme \eqref{eq2.1} with $b_2(h J_n)$ satisfies \eqref{eq2.18} and the node $c_2=\frac{3}{4}$ (fulfilling the order conditions in \eqref{eq2.17}). Then, the method converges with order four, i.e.
 \begin{equation} \label{eq2.21}
 \|u_n-u(t_n)\|\leq C h^4
 \end{equation}
 on \ $t_0 \leq  t_n =t_0+nh \leq  T$ with a constant $C$ that depends on $n$ and $h$.
\end{theorem}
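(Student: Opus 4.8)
The plan is to convert the one-step (local) error bound from Lemma~\ref{lm2.1} into a global error bound by a standard Lady Windermere's fan argument combined with a discrete Gronwall inequality, the technique customarily used for exponential Rosenbrock methods (cf. \cite{HOS09, LO14a}). The two essential ingredients are the local error estimate $\|\tilde{e}_{n+1}\| \leq Ch^5$, which is already furnished by Lemma~\ref{lm2.1}, and a Lipschitz-type stability estimate for the numerical propagator. First I would write the scheme \eqref{eq2.1} as a one-step map $u_{n+1}=\Phi_h(u_n)$, where $\Phi_h$ encodes the linearization $J(v)=\partial F/\partial u(v)$, the nonlinearity $g_v(u)=F(u)-J(v)u$, and the matrix functions $\varphi_k(hJ(v))$, all evaluated at the input $v$. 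With this notation the exact-start quantities of \eqref{eq2.2} are precisely $\bar{u}_{n+1}=\Phi_h(u(t_n))$, so the local error is $\tilde{e}_{n+1}=\Phi_h(u(t_n))-u(t_{n+1})$.

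Denoting the global error by $e_n=u_n-u(t_n)$ and inserting $\pm\,\Phi_h(u(t_n))$ yields the error recursion
\begin{equation}
e_{n+1}=\big(\Phi_h(u_n)-\Phi_h(u(t_n))\big)+\tilde{e}_{n+1},
\end{equation}
which telescopes into a sum of propagated local errors. The crux is then the stability estimate $\|\Phi_h(v)-\Phi_h(w)\|\leq(1+hC_L)\|v-w\|$, valid for $v,w$ in a strip around the exact solution. Using \eqref{eq2.9add}, the dominant contribution is the linear part $\ee^{hJ(v)}v$; writing $\ee^{hJ(v)}v-\ee^{hJ(w)}w=\ee^{hJ(v)}(v-w)+(\ee^{hJ(v)}-\ee^{hJ(w)})w$ and noting that under Assumption~1 the Jacobian is bounded, say $\|J\|\leq M$, so that $\|\ee^{hJ(v)}\|\leq\ee^{hM}=1+\mathcal{O}(h)$, controls the first piece. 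For the second piece, and for the remaining $\varphi_k$ and $g_v$ contributions, I would use the Lipschitz continuity of the Jacobian, $\|J(v)-J(w)\|\leq L\|v-w\|$, together with $\|\varphi_k(hJ(v))-\varphi_k(hJ(w))\|\leq Ch\|J(v)-J(w)\|$; each of these carries an extra factor $h$ and is therefore absorbed into the $hC_L$ term.

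Inserting the local error bound and the stability estimate into the recursion gives $\|e_{n+1}\|\leq(1+hC_L)\|e_n\|+Ch^5$ with $e_0=0$. Solving this discrete Gronwall inequality and using $nh\leq T-t_0$ together with $(1+hC_L)^n\leq\ee^{C_L(T-t_0)}$ produces $\|e_n\|\leq Ch^5\cdot n\cdot\ee^{C_L(T-t_0)}\leq\tilde{C}h^4$, the claimed order-four bound. The step I expect to demand the most care is the stability estimate: because the Jacobian is re-evaluated at the numerical solution at every step, comparing the numerical flow with the exact-start flow forces one to bound the Jacobian difference $\|J(u_n)-J(u(t_n))\|$ by $\|e_n\|$, and one must verify that this feedback enters only at order $h$ so that $C_L$ stays uniform in $n$ and $h$. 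A bootstrapping induction, in which the hypothesis $\|e_n\|\leq\mathcal{O}(h^4)$ guarantees that $u_n$ remains inside the strip where Assumption~1 applies, closes the argument.
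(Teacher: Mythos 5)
Your proposal is correct and takes essentially the same route as the paper: the paper's own proof simply observes that, with the $\mathcal{O}(h^5)$ local error already furnished by Lemma~\ref{lm2.1}, only stability remains, and that stability is straightforward because under Assumption~1 the Jacobian $J(u)=\frac{\partial F}{\partial u}(u)$ satisfies a Lipschitz condition in a strip along the exact solution. Your Lady Windermere's fan argument, with the one-step stability bound $\|\Phi_h(v)-\Phi_h(w)\|\leq(1+hC_L)\|v-w\|$, the discrete Gronwall inequality, and the bootstrapping to keep $u_n$ in the strip, is precisely the standard detail the paper chooses to omit.
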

\begin{proof}
It is remaining to show that the numerical scheme \eqref{eq2.1} is stable. This is straightforward due to the fact that, under Assumption~1, the Jacobian $J(u)=\frac{\partial F (u)}{\partial u}$ also satisfies the Lipschitz condition in a strip along the exact solution $u$. Another possibility is to employ the stability condition of exponential Rosenbrock methods which is recalled in \eqref{eq4.13} in Section~\ref{subsection4.13} below. We thus omit the details.
\end{proof}
\subsection{Derivation of classical fourth-order 2-stage schemes}
Clearly, a 2-stage scheme \eqref{eq2.1} is derived if  the coefficient $b_2(h J_n)$ is identified. Since $b_2(h J_n)$ is usually chosen as linear combinations of some matrix functions $\varphi_k (h J_n)$, the condition \eqref{eq2.18} determines explicitly such a linear combination.
 For example, one can choose $b_2(h J_n)$ as a linear combination of $\varphi_1 (h J_n)$ and $\varphi_2 (h J_n)$ as $b_2(h J_n)=-\frac{8}{27}\varphi_1 (h J_n)+\frac{48}{27}\varphi_2 (h J_n) $ resulting in the following scheme which will be called $\mathtt{exprb42N}$:
\begin{subequations} \label{eq2.20}
\begin{align}
 U_{n2}&= u_n + \frac{3}{4} h \varphi _{1} ( \frac{3}{4} h J_n)F(u_n), \\
u_{n+1}& = u_n + h \varphi _{1} ( h J_n)F(u_n) + h \big(-\dfrac{8}{27}\varphi_1 (h J_n)+\dfrac{48}{27}\varphi_2 (h J_n) \big)(g_n (U_{n2})- g_n(u_n )).  
\end{align}
\end{subequations}
Note that  one can derive many other 2-stage fourth-order schemes like \eqref{eq2.20} as long as $b_2(h J_n)$ satisfies condition \eqref{eq2.18}.

\section{Construction of a stiff fourth-order 2-stage exponential Rosenbrock scheme}\label{sc4}
It should be mentioned that our convergence analysis presented in Section~\ref{sc3} cannot be applied if the Jacobian $J_n$ has a large norm or is even unbounded operator. The reason for that is simply because Assumption 1 and thus the expansions \eqref{eq2.4} and \eqref{eq2.13} are no longer valid. Unfortunately, this is usually the situation of stiff problems arising when discretizing the space dimension of many time dependent PDEs. Examples of such problems are diffusion-reaction equations, the heat equations, just to mention a few.  Therefore, in this section our aim is to design a 2-stage exponential Rosenbrock scheme of the form \eqref{eq2.1} that is superconvergent and works for such stiff problems. We will focus on the common case where the vector field $F(u)$ can be decomposed into two parts: the linear part which is stiff and the nonlinear part which is nonstiff, namely 
\begin{equation} \label{eq4.1}
u'(t)=F(u(t))=Au(t)+ g(u(t)), \q u(t_0)=u_0.
\end{equation}
In the subsequent analysis, we will use the  framework of strongly continuous semigroups in the Banach space $X$ (for instance, see \cite{EN2000,PAZY83}) to handle this type of stiff problems. In particular, throughout this section the following main assumptions (see also \cite{HOS09, LO14a}) will be employed.

{\em Assumption 2. The linear operator $A$ is the generator of a strongly continuous semigroup  $\ee^{tA}$ in $X$}.

{\em Assumption 3. Suppose that \eqref{eq4.1} possesses a sufficiently smooth solution $u: [0, T]\rightarrow X$, with derivatives in $X$ and that the nonlinearity $g: X \rightarrow X$ is sufficiently often Fr\'echet differentiable in a strip along the exact solution. All occurring derivatives are
supposed to be uniformly bounded}.

By using a standard perturbation result in \cite[Chap.~3.1]{PAZY83}, it is easy to infer from Assumptions 1 and 2 that the Jacobian 
\begin{equation} \label{eq4.2}
J=J(u)=\frac{\partial F}{\partial u}(u) = A + g'(u)
\end{equation}
also generates a strongly continuous semigroup. This implies that there exist constants $C$ and $\omega$ such that the bound
\begin{equation} \label{eq4.3}
\|\ee^{tJ}\|_{X\leftarrow X}\leq C\ee^{\omega t}, \quad t\geq 0
\end{equation}
holds uniformly in a neighborhood of the exact solution. As a consequence of the bound \eqref{eq4.3}, one can see that the coefficients  $\varphi_1 (h J_n), \varphi_1 (c_2 h J_n)$ and $b_{2}(h J_n)$ of the 2-stage exponential Rosenbrock scheme \eqref{eq2.1} are bounded operators. 
 Assumption 2 further implies that the Jacobian \eqref{eq4.2} and $g(u)$ are both locally Lipschitz in a strip
along the exact solution $u$. In particular, in a neighborhood of the exact solution we have
\begin{equation} \label{eq4.4}
\| J(u)-J(v)\|_{X\leftarrow X} =  \left\|  g'(u)- g'(v) \right\|_{X\leftarrow X} \leq L\|u-v\|.
\end{equation}
\subsection{Local error and relaxing stiff order conditions for 2-stage methods}
As we are interested in constructing a superconvergent 2-stage exponential Rosenbrock scheme \eqref{eq2.1} for solving \eqref{eq4.1}, one has to find the right coefficient $b_2(h J_n)$ which satisfies the \emph{stiff} order conditions for methods of order 4. In the following we will show that this can be done by using the new and simplified stiff order conditions for exponential Rosenbrock methods of order 4 given in \cite{LO14a, LO13} and relaxing one of them. For convenience, we display the local error expansion of 2-stage methods, which can be obtained at once by using the result of the local error for $s$-stage methods given in \cite[Sec.3.3]{LO14a}, as follows
\begin{equation} \label{eq4.5}
\begin{aligned}
\tilde{e}_{n+1}=& h^3 \big( b_2 (h \tilde{J}_n)\frac{c^2_2}{2!}-\varphi_3 (h \tilde{J}_n) \big)\tilde{g}''_n(\tilde{u}_n)(\tilde{u}'_n,\tilde{u}'_n)\\
+ & h^4\big( b_2 (h \tilde{J}_n)\frac{c^3_2}{3!}-\varphi_4 (h \tilde{J}_n) \big)\big( \tilde{g}^{(3)}_n(\tilde{u}_n)(\tilde{u}'_n, \tilde{u}'_n, \tilde{u}'_n)+3 \tilde{g}''_n(\tilde{u}_n)(\tilde{u}'_n, \tilde{u}''_n) \big) +\mathcal{O}(h^5).
\end{aligned}
\end{equation}
Here
\begin{equation} \label{eq4.6}
\tilde{J}_{n} = A+ g'(\tilde{u}_n), \q \tilde{g}_n(u)=F(u)-\tilde{J}_{n} u=g(u)-g'(\tilde{u}_n ) u.
\end{equation}
Note that since $\tilde{g}'_n (\tilde{u}_n)=0$ and $\tilde{g}^{(k)}_n (u)=g^{(k)}(u)$ ($k\geq 2$), one can actually replace \eqref{eq4.5} by  
\begin{equation} \label{eq4.7}
\begin{aligned}
\tilde{e}_{n+1}=& h^3 \big( b_2 (h \tilde{J}_n)\frac{c^2_2}{2!}-\varphi_3 (h \tilde{J}_n) \big) g''(\tilde{u}_n)(\tilde{u}'_n,\tilde{u}'_n)\\
+ & h^4\big( b_2 (h \tilde{J}_n)\frac{c^3_2}{3!}-\varphi_4 (h \tilde{J}_n) \big)\big( g^{(3)}(\tilde{u}_n)(\tilde{u}'_n, \tilde{u}'_n, \tilde{u}'_n)+3 g''(\tilde{u}_n)(\tilde{u}'_n, \tilde{u}''_n) \big) +\mathcal{O}(h^5).
\end{aligned}
\end{equation}
Requiring  $\tilde{e}_{n+1}=\mathcal{O}(h^{5})$ retrieves the stiff order conditions for methods of order 4 (see \cite{LO14a}), which are written for 2-stage methods as 
\[
b_2 (Z)\frac{c^2_2}{2!}=\varphi_3 (Z), \q b_2 (Z)\frac{c^3_2}{3!}=\varphi_4 (Z)
\]
with  $Z$ denotes an arbitrary square matrix. However, this is impossible due to the fact that matrix functions $\varphi_3 (Z), \varphi_4 (Z)$ are linearly independent.
We thus follow the similar remedy as presented in \cite[Sec.4.2]{LO14a} in order to relax the stiff order conditions. First, one realizes that there exist bounded operators $\widehat{b}_2 (h \tilde{J}_n)$ and $\widehat{\varphi}_4 (h \tilde{J}_n)$ such that
\begin{equation} \label{eq4.8}
b_2 (h \tilde{J}_n)\frac{c^3_2}{3!}-\varphi_4 (h \tilde{J}_n)=\big( b_2 (0)\frac{c^3_2}{3!}-\varphi_4 (0)\big)+h \Big(\widehat{b}_2 (h \tilde{J}_n)\frac{c^3_2}{3!}-\widehat{\varphi}_4 (h \tilde{J}_n) \Big) \tilde{J}_n.
\end{equation}
This is due to the recurrence relation \eqref{eq1.6} for $\varphi_k (z)$ and the fact that $b_2 (h \tilde{J}_n)$ is chosen as linear combinations of $\varphi_k (h \tilde{J}_n)$.
Inserting \eqref{eq4.8} into \eqref{eq4.7} gives 
\begin{equation} \label{eq4.9}
\begin{aligned}
\tilde{e}_{n+1}&=h^3 \big( b_2 (h \tilde{J}_n)\frac{c^2_2}{2!}-\varphi_3 (h \tilde{J}_n) \big) g''(\tilde{u}_n)(\tilde{u}'_n,\tilde{u}'_n) \\
&+ h^4\big( b_2 (0)\frac{c^3_2}{3!}-\varphi_4 (0)\big) \big( g^{(3)}(\tilde{u}_n)(\tilde{u}'_n, \tilde{u}'_n, \tilde{u}'_n)+3 g''(\tilde{u}_n)(\tilde{u}'_n, \tilde{u}''_n) \big)\\
&+ h^5 \Big(\widehat{b}_2 (h \tilde{J}_n)\frac{c^3_2}{3!}-\widehat{\varphi}_4 (h \tilde{J}_n) \Big) \tilde{J}_n \big( g^{(3)}(\tilde{u}_n)(\tilde{u}'_n, \tilde{u}'_n, \tilde{u}'_n)+3 g''(\tilde{u}_n)(\tilde{u}'_n, \tilde{u}''_n) \big)  + \mathcal{O}(h^5).
\end{aligned}
\end{equation}
This local error expansion brings us to the following result concerning the relaxing stiff order conditions for 2-stage methods of order 4.
\begin{lemma} \label{lm4.1}
Under Assumptions 2 and 3 and further assume that the operator $A$ and the nonlinearity $g(u)$ in \eqref{eq4.1}  are such that 
\begin{equation} \label{eq4.10}
A \big( g^{(3)}(\tilde{u}_n)(\tilde{u}'_n, \tilde{u}'_n, \tilde{u}'_n)+3 g''(\tilde{u}_n)(\tilde{u}'_n, \tilde{u}''_n) \big)
\end{equation}
is uniformly bounded on $X$, a 2-stage explicit exponential Rosenbrock method \eqref{eq2.1} has order of consistency five, i.e. $\tilde{e}_{n+1}=\mathcal{O}(h^{5})$ if the following order conditions are fulfilled 
\begin{subequations} \label{eq4.11}
\begin{align}
 b_2 (Z)c^2_2&=2\varphi_3 (Z), \\
b_2 (0)c^3_2 &=6\varphi_4 (0) 
\end{align}
\end{subequations}
with $Z$ denotes an arbitrary square matrix. Moreover, the remainder term of $\tilde{e}_{n+1}$, which is hidden behind the Landau notation $\mathcal{O}(\cdot)$, is bounded by $Ch^5$ with a constant $C$ that only depends on values that are uniformly bounded by the assumptions made, i.e., is independent of $n$ and $h$. 
 \end{lemma}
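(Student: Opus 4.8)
The plan is to start from the local error expansion \eqref{eq4.9}, which has already been organized so that the two conditions \eqref{eq4.11} annihilate everything below order five. Rewriting \eqref{eq4.11} in the equivalent normalized form $b_2(Z)\frac{c_2^2}{2!}=\varphi_3(Z)$ and $b_2(0)\frac{c_2^3}{3!}=\varphi_4(0)$, I would first insert the first identity into the $h^3$-bracket of \eqref{eq4.9}; since it holds for every square matrix $Z$, and in particular for $Z=h\tilde{J}_n$, the whole $h^3$ contribution vanishes identically. I would then insert the second (relaxed) identity into the $h^4$-bracket: because this matching is imposed only at $Z=0$, it cancels exactly the term $\big(b_2(0)\frac{c_2^3}{3!}-\varphi_4(0)\big)(\cdots)$, which is precisely the form in which the $h^4$ contribution appears in \eqref{eq4.9} after the splitting \eqref{eq4.8}.

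What then survives is only the $h^5$ term on the third line of \eqref{eq4.9}, namely $h^5\big(\widehat{b}_2(h\tilde{J}_n)\frac{c_2^3}{3!}-\widehat{\varphi}_4(h\tilde{J}_n)\big)\tilde{J}_n\,w_n$ with $w_n=g^{(3)}(\tilde{u}_n)(\tilde{u}'_n,\tilde{u}'_n,\tilde{u}'_n)+3g''(\tilde{u}_n)(\tilde{u}'_n,\tilde{u}''_n)$, together with the generic $\mathcal{O}(h^5)$ remainder. To bound this uniformly I would argue in two pieces. The operators $\widehat{b}_2(h\tilde{J}_n)$ and $\widehat{\varphi}_4(h\tilde{J}_n)$ are, by their construction in \eqref{eq4.8}, linear combinations of the $\varphi_k(h\tilde{J}_n)$ and are therefore bounded on $X$ uniformly in $h$, as an immediate consequence of the semigroup estimate \eqref{eq4.3}. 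The delicate factor is $\tilde{J}_n w_n$. Using the decomposition $\tilde{J}_n=A+g'(\tilde{u}_n)$ from \eqref{eq4.6}, I would split it as $A\,w_n+g'(\tilde{u}_n)w_n$: the first summand is exactly the quantity \eqref{eq4.10}, assumed uniformly bounded, while the second is bounded by Assumption~3, since $g'$ and the derivatives $\tilde{u}'_n,\tilde{u}''_n$ are uniformly bounded along the exact solution. Multiplying these uniform bounds gives $\tilde{e}_{n+1}=\mathcal{O}(h^5)$ with a constant independent of $n$ and $h$.

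I expect the main obstacle --- and the entire reason for introducing hypothesis \eqref{eq4.10} --- to be the control of this surviving factor $\tilde{J}_n w_n$. In the stiff regime $A$ is unbounded, so the full order condition $b_2(Z)\frac{c_2^3}{3!}=\varphi_4(Z)$ is unattainable: the linear independence of $\varphi_3$ and $\varphi_4$ leaves no freedom in the single coefficient $b_2$, and one cannot discard the $h^5$ term without a boundedness hypothesis on $A$ acting on $w_n$. The splitting \eqref{eq4.8}, obtained from the recurrence \eqref{eq1.6}, is what isolates the $Z=0$ part and factors out one power of $h\tilde{J}_n$, thereby converting an otherwise unbounded $h^4$ contribution into a bounded $h^5$ one; the technical heart of the estimate is verifying via \eqref{eq1.6} and \eqref{eq4.3} that $\widehat{b}_2$ and $\widehat{\varphi}_4$ are genuinely bounded. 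Finally I would observe that the two relaxed conditions \eqref{eq4.11} are mutually compatible and in fact force $c_2=\frac{3}{4}$, matching the classical node in \eqref{eq2.17}, so the stiff construction is consistent with the nonstiff one.
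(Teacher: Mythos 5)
Your proposal is correct and takes essentially the same approach as the paper's own proof: you cancel the $h^3$ and $h^4$ contributions of \eqref{eq4.9} via the conditions \eqref{eq4.11}, and you control the surviving $h^5$ term by splitting $\tilde{J}_n = A + g'(\tilde{u}_n)$ so that Assumption~3 together with the regularity hypothesis \eqref{eq4.10} gives a bound independent of $n$ and $h$. The paper's proof is merely a terser statement of this same argument, implicitly relying, as you make explicit, on the uniform boundedness of $\widehat{b}_2(h\tilde{J}_n)$ and $\widehat{\varphi}_4(h\tilde{J}_n)$ from \eqref{eq4.8} via the semigroup bound \eqref{eq4.3}.
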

 \begin{proof}
 It follows at once from Assumption~3 and the additional regularity assumption \eqref{eq4.10} that $\tilde{J}_n\big( g^{(3)}(\tilde{u}_n)(\tilde{u}'_n, \tilde{u}'_n, \tilde{u}'_n)+3 g''(\tilde{u}_n)(\tilde{u}'_n, \tilde{u}''_n) \big)$ is also uniformly bounded on $X$ (since $\tilde{J}_{n} = A+ g'(\tilde{u}_n)$). 
In view of \eqref{eq4.9}, the conclusion of Lemma~\ref{lm4.1} is thus verified by using the Assumptions 2--3 and the given order conditions in \eqref{eq4.11}.
 \end{proof}
 \emph{Remark~2.} The additional smoothness condition \eqref{eq4.10} is often fulfilled for many semilinear parabolic PDEs such as reaction-diffusion equations, the Allen-Cahn equation and the Chafee-Infante problem \cite[Chap.~5]{H81}, where the operator $A$ is the strongly second-order elliptic differential operator (e.g. the Laplacian or the gradient). In particular, for such problems, one can show that Assumption~3 implies \eqref{eq4.10}. For more details, we refer to \cite[Example~4.1]{LO14a}. 
\subsection{Derivation of a fourth-order 2-stage stiffly accurate scheme}
Solving the order conditions in \eqref{eq4.11} gives $b_2 (0)=\dfrac{2\varphi_3(0)}{c^2_2}=\dfrac{6\varphi_4(0)}{c^3_2}$ which implies $c_2=\dfrac{3\varphi_4(0)}{\varphi_3(0)}=\dfrac{3}{4}$ and thus $b_2 (Z)=\dfrac{32}{9}\varphi_3 (Z)$. This is the unique solution of \eqref{eq4.11}. Inserting this result into \eqref{eq2.1} we obtain the following 2-stage scheme which will be called $\mathtt{exprb42}$:
\begin{subequations} \label{eq4.12}
\begin{align}
 U_{n2}&= u_n + \frac{3}{4} h \varphi _{1} ( \frac{3}{4} h J_n)F(u_n), \\
u_{n+1}& = u_n + h \varphi _{1} ( h J_n)F(u_n) + h \frac{32}{9}\varphi_3 (h J_n) (g_n (U_{n2})- g_n(u_n )).  
\end{align}
\end{subequations}
The convergence of this scheme will be stated in the next section.
 \subsection{Stability and convergence result}\label{subsection4.13}
It is shown in \cite[Sec. 3.3]{HOS09} that the following stability bound
\begin{equation} \label{eq4.13}
\Bigl\|\prod_{j=0}^{n-k} \ee^{h J_{n-j}} \Bigr\|_{X\leftarrow X}\leq C_\text{\rm S},\qquad t_0\le t_k \le t_n\le T
\end{equation}
is the key to show the convergence of exponential Rosenbrock methods \eqref{eq1.4}. The good thing here is that the constant $C_\text{\rm S}$ in \eqref{eq4.13} is uniform in $k$ and $n$ despite the fact that $J_n$ varies from step to step.

With the help of \eqref{eq4.13}, one can prove that $\mathtt{exprb42}$ converges with global order 4 by using the same techniques presented in the recent work \cite[Sec. 4]{LO14a} (presenting the convergence results for methods of orders up to 5).  
For convenience for the reader, below we recall some of the important results which can be applied directly to our case ($s=2, \ c_2=\frac{3}{4}, \ b_2 (h J_n)=\frac{32}{9}\varphi_3 (h J_n) $). However, we will omit other details of their proofs. 

Let $ e_{n+1} = u_{n+1} - u(t_{n+1})=u_{n+1} - \tilde{u}_{n+1}$ denote the global error of the scheme \eqref{eq4.12}. One can show that it satisfies 
\begin{equation} \label{eq4.14}
e_{n+1}=\ee^{h J_{n}}e_n + h P_n +\tilde{e}_{n+1}, \q e_0=0
\end{equation}
with 
\begin{equation} \label{eq4.15}
\begin{aligned}
P_n &= \varphi _{1} ( h J_{n})\big(g_n (u_n)-g_n (\tilde{u}_n)\big)+\big( \varphi _{1} ( h J_{n})-\varphi _{1} ( h \tilde{J}_{n}) \big) F(\tilde{u}_n)\\
&+ b_2 (h J_n) (g_n(U_{n2})-g_n(u_n) )  - b_2 (h \tilde{J}_n)(\tilde{g}_n(\overbar{U}_{n2})-\tilde{g}_n(\tilde{u}_n) ). 
\end{aligned}
\end{equation}
Under Assumptions 2 and 3, the following estimate 
 \begin{equation} \label{eq4.16}
 \|P_n \|\leq C\|e_n\| +C\|e_n\|^2 +Ch^6
 \end{equation}
 holds true as a direct result of Lemma 4.5 in \cite{LO14a}.
 
We are now at the final stage of formulating our convergence result.
\begin{theorem}\label{th4.1}
Let the initial value problem \eqref{eq4.1} satisfy  the Assumptions of Lemma~\ref{lm4.1}. Then, the numerical solution $u_n$ of the 2-stage explicit exponential Rosenbrock method \ $\mathtt{exprb42}$ satisfies the error bound
\begin{equation}\label{eq4.17}
\| u_n -u(t_n)\|\leq C h^4
\end{equation}
uniformly on \ $t_0 \leq  t_n =t_0+nh \leq  T$ with a constant $C$ that depends on $T-t_0$, but is independent of $n$ and $h$.
\end{theorem}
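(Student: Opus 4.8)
The plan is to convert the one-step local error estimate of Lemma~\ref{lm4.1} into a global bound by the standard Lady Windermere's fan argument, using the uniform stability bound \eqref{eq4.13} to control the accumulation of per-step errors. The three ingredients I would rely on are already in place: the error recursion \eqref{eq4.14}, the perturbation estimate \eqref{eq4.16} for $P_n$, and the local error bound $\|\tilde{e}_{n+1}\|\le Ch^5$ guaranteed by Lemma~\ref{lm4.1} (with a constant independent of $n$ and $h$, thanks to the regularity assumption \eqref{eq4.10}).

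First I would solve the linear recursion \eqref{eq4.14} explicitly. Since $e_0=0$, repeated substitution gives
\[
e_n = \sum_{j=0}^{n-1}\Bigl(\prod_{i=j+1}^{n-1}\ee^{h J_i}\Bigr)\bigl(h P_j+\tilde{e}_{j+1}\bigr).
\]
Taking norms and invoking \eqref{eq4.13} to bound each product of semigroup operators by $C_{\text{S}}$ yields
\[
\|e_n\|\le C_{\text{S}}\sum_{j=0}^{n-1}\bigl(h\|P_j\|+\|\tilde{e}_{j+1}\|\bigr).
\]
Inserting the local error bound and \eqref{eq4.16}, the purely inhomogeneous contributions $\|\tilde{e}_{j+1}\|\le Ch^5$ together with the $h\cdot Ch^6$ terms sum to at most $n\,Ch^5\le C(T-t_0)h^4$, because $nh\le T-t_0$; this is precisely the mechanism that degrades the order of consistency five to a global order four. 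What remains on the right-hand side are the linear term $Ch\sum_{j}\|e_j\|$ and the quadratic term $Ch\sum_{j}\|e_j\|^2$.

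The main obstacle is the quadratic term, which I would dispose of by a bootstrapping induction that simultaneously ensures the numerical solution never leaves the strip along the exact solution where Assumptions 2--3 and the Lipschitz bound \eqref{eq4.4} are valid. Assuming inductively that $\|e_j\|\le Ch^4$ for all $j<n$, one has $\|e_j\|^2\le Ch^4\|e_j\|$, so the quadratic contribution is of even higher order and is absorbed into the linear one for $h$ sufficiently small; the numerical iterates then stay within the admissible neighborhood so that all constants remain uniform. This reduces the estimate to
\[
\|e_n\|\le Ch^4+Ch\sum_{j=0}^{n-1}\|e_j\|.
\]
A discrete Gronwall lemma finally gives $\|e_n\|\le Ch^4\,\ee^{Ct_n}\le Ch^4\,\ee^{C(T-t_0)}$, which both closes the induction and produces the asserted bound \eqref{eq4.17} with a constant depending only on $T-t_0$ and independent of $n$ and $h$. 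Since all the deeper technical steps (the recursion, the bound on $P_n$, and the stability estimate) are imported verbatim from \cite{LO14a,HOS09} for the specialization $s=2$, $c_2=\tfrac34$, $b_2(hJ_n)=\tfrac{32}{9}\varphi_3(hJ_n)$, the only genuinely new work is checking that these inputs apply under the hypotheses of Lemma~\ref{lm4.1}, which the above already verifies.
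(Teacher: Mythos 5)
Your proposal is correct and follows essentially the same route as the paper: solving the recursion \eqref{eq4.14}, invoking the stability bound \eqref{eq4.13}, the estimate \eqref{eq4.16}, and the $\mathcal{O}(h^5)$ local error from Lemma~\ref{lm4.1} to arrive at the summed inequality \eqref{eq4.19}, then concluding with a discrete Gronwall lemma. Your explicit bootstrapping induction to absorb the quadratic term $\|e_j\|^2$ is a welcome elaboration of a detail the paper delegates to the cited Gronwall lemma, but it does not constitute a different approach.
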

\begin{proof}
Solving the recursion \eqref{eq4.14} gives
\begin{equation} \label{eq4.18}
e_{n}=h\sum_{k=0}^{n-1}  \prod_{j=1}^{n-k-1} \ee^{h J_{n-j}}  \Bigl(P_k + \frac{1}{h}\tilde{e}_{k+1} \Bigr).
\end{equation}
The result of Lemma~\ref{lm4.1} shows that the local error of $\mathtt{exprb42}$ satisfies $\tilde{e}_{k+1} =\mathcal{O}(h^5)$. 
Next, we use the stability estimate (\ref{eq4.13}) and the bound (\ref{eq4.16}) to get
\begin{equation} \label{eq4.19}
\| e_{n}\| \leq C \sum_{k=0}^{n-1} h \big( \| e_{k}\|  +\| e_{k}\|^2   + h^{4}\big).
\end{equation}
The desired bound \eqref{eq4.17} follows by an application of a discrete Gronwall lemma (see \cite{Em05}) to (\ref{eq4.19}).
\end{proof}

\section{Adaptive time-stepping schemes}\label{sc5}
It should be mentioned that the newly constructed schemes  $\mathtt{exprb42}$ (see \eqref{eq4.12}) and $\mathtt{exprb42N}$ (see \eqref{eq2.20}) can also be implemented with variable stepsizes.  
Indeed one can use the standard way as employed in \cite{HOS09,LO14a} (for other exponential Rosenbrock schemes) that is to consider \eqref{eq2.1} together with an embedded scheme of lower order
\begin{equation} \label{eq5.1}
\hat{u}_{n+1}= u_n + h \varphi _{1} ( h J_n)F(u_n) + h \widehat{b}_{2}(h J_n) (g_n (U_{n2})- g_n(u_n ))
\end{equation}
which uses the same internal stage $U_{n2}$. It is clear that a 2-stage method of order 3 requires $b_2(Z)=\dfrac{2\varphi_3(Z)}{c^2_2}$ for any node $c_2 \ne 0$. For $c_2=3/4$ that is uniquely determined (so is $U_{n2}$) by the construction of the two new schemes, it has been shown that such a 2-stage method can even attain order 4. This implies that it is impossible to embed  $\mathtt{exprb42}$ with a 2-stage method of order 3. The fact that $\mathtt{exprb42N}$ has the same $U_{n2}$ as  $\mathtt{exprb42}$,  we thus consider to embed both of them with a second-order error estimate, which is the exponential Rosenbrock-Euler method (so $\widehat{b}_{2}(h J_n)=0$).
For later use in our numerical experiments, we display $\mathtt{exprb42}$  and $\mathtt{exprb42N}$  (for variable stepsizes implementation) in reduced Butcher tableau (see \cite[Sect.2]{LO16}) as follows

\hspace{2cm} $\mathtt{exprb42N}$: \hspace{3.8cm} $\mathtt{exprb42}$:

\footnotesize
\begin{displaymath}
\renewcommand{\arraystretch}{1.3}
\begin{array}{c|c}
\dfrac{3}{4}&\\ [1pt]
\hline
& -\cfrac{8}{27}\varphi _1 +\cfrac{48}{27}\varphi _2 \\[1pt]
& 0
\end{array}
\q \q  \q  \q  \q \q  \q 
\begin{array}{c|c}
\dfrac{3}{4}&\\ [1pt]
\hline
& \cfrac{32}{9}\varphi_3 \\[1pt]
& 0
\end{array}.
\end{displaymath}
\normalsize
\section{Numerical experiments}\label{sc6}
In this section we verify our convergence results and demonstrate the efficiency of the new integrators $\mathtt{exprb42N}$  and $\mathtt{exprb42}$. To this aim, we carry out numerical experiments on a set of test problems (see below). First, we discuss the implementation of the new integrators.
\subsection{Implementation and test problems}
\subsubsection{Implementation}
The implementation of exponential integrators (in particular, the new integrators $\mathtt{exprb42N}$ and $\mathtt{exprb42}$) requires computing the action of matrix functions $\varphi_k (h J_n)$ on vectors $v_k$. With the recent developments of numerical linear algebra in computing matrix functions (see, for example \cite{AH11,NW12,CKOS14}), this can be done efficiently. In order to take advantages of computing a linear combination of terms like $\sum_{k= 1}^{p}\varphi_k (h J_n) v_k$ (by one single evaluation) and computational time, we use here the adaptive Krylov technique proposed in \cite{NW12, TLP12}. For variable step sizes implementation, the error estimate ${\tt err}=u_{n+1}- \hat{u}_{n+1}$ (see \cite[Chapter IV.8]{HW96}) will be used to control time steps. 
All the simulations are run in MATLAB.  

Next, we give a list of test problems including both nonstiff and stiff differential equations that fit in the framework.
\subsubsection{Nonstiff problems}
\begin{example}\label{ex1}\rm
Consider an example from Astronomy-the restricted three body problem (see \cite{HNW93,PWW2000}): 
\begin{equation} \label{example1}
y''_1 =-\frac{y_1}{(y^2_1+ y^2_2)^{3/2}}, \q
y''_2 =-\frac{y_2}{(y^2_1+ y^2_2)^{3/2}}, \q t \in [0, 10].
\end{equation}
The equation of motion above can be written as a system of first-order differential equations as $y'_1=y_3, \ y'_2=y_4, \ y'_3=- y_{1}/r^{3}, \ y'_4=-y_{2}/r^{3}$ with $r=\sqrt{y^2_1+ y^2_2}$.
For this simple case (which can be considered as a two-body orbit problem) the exact solution is known, that is $y(t)=[\cos (t), \sin (t), - \sin (t),  \cos (t)]$.
\end{example}
\begin{example}\label{ex2}\rm
Consider the van der Pol equation (see \cite{HNW93}): 
\begin{equation} \label{example2}
\begin{aligned}
y'_1& =y_2, \\
y'_2 &=(1-y^2_1)y_2-y_1, \q t \in [0, 2]\\
y_1&(0)=2, \ y_2(0)=0.
\end{aligned}
\end{equation}
Since the exact solution of \eqref{example2} is unknown, we compute its reference solution by using a nonstiff solver such as \emph{ode45} with ATOL=RTOL=$10^{-14}$.
\end{example}
\subsubsection{Stiff problems}
\begin{example}\label{ex3}\rm
Consider the one-dimensional semilinear parabolic problem (see \cite{HO05})
\begin{equation} \label{example3}
\frac{\partial u}{\partial t}- \frac{\partial^2 u}{\partial x^2}  =\frac{1}{1+u^2}+\Phi (x,t)
\end{equation}
for $u=u(x,t)$ on the unit interval $[0,1]$ and  $t \in [0,1]$, subject to homogeneous Dirichlet boundary conditions. The source function $\Phi$ is chosen in such a way that the exact solution of the problem is $u(x,t)=x(1-x)\ee^t$.
\end{example}
In order to solve \eqref{example3} numerically, the first step is to discretize it in space by standard finite differences with $M=199$ (inner) grid points. This yields a very stiff system of the form \eqref{eq4.1} (with $\|A\|_{\infty}=1.5999e+05$). Then we use our new integrators to integrate this ODE system in time with constant step sizes. Let  $U^{n,h}_i \approx u(x_i, t_n)$ denote the numerical solution at $t_n = nh$ and grid point $x_i = \frac{i}{200}$ and let $\bar u_i$ denote a reference solution of the spatially discrete problem at time $t=1$ and grid point $x_i$, computed with sufficiently small time steps. Note that for this example, since we know the exact solution, one can take $\bar u_i=x_i (1-x_i)$. The time integration errors $U^{N,1/N}-\bar u$ are measured in the maximum norm $\max_{1\le i\le 199}|U^{N,1/N}_i-\bar u_i|$. 
 
 \begin{example}\label{ex4}\rm
Consider the two-dimensional advection-diffusion-reaction equation (see, for example,  \cite{HOS09, LO14a})
\begin{equation} \label{example4}
\frac{\partial u}{\partial t}=0.01 \Delta u +10 \nabla u  + 100 u \big(u-\tfrac{1}{2} \big) (1-u)
\end{equation}
for $u=u(x,y,t)$ on the unit square $ \Omega =[0,1]^2$  with the initial value
\begin{equation*}
u(x,y,0)=0.3+ 256\big(x(1-x)y(1-y)\big)^2,
\end{equation*}
subject to homogeneous Neumann boundary conditions.
Here $\Delta$ and $\nabla$ denote the Laplacian and the gradient vector field in two dimensions, respectively. Discretizing \eqref{example4} in space by standard finite differences using $101$ grid points in each direction with meshwidth $\dd x=\dd y=1/100$ yields a mildly stiff system of the form \eqref{eq4.1} (with $\|A\|_{\infty}=2.4e+03$). For the time integration of this resulting system of ODEs, we use our new integrators. Since the exact solution of \eqref{example4} is unknown,  a reliable reference solution is computed by using sufficiently small time steps (one can also use the stiff solver $\mathtt{ode15s}$ with ATOL=RTOL=$10^{-14}$). As done for Example~\ref{ex3}, the time integration errors are  measured in a discrete maximum norm at the final time $T=0.08$.
\end{example}
\subsection{Accuracy verification and performance comparison}
The purpose of giving the two nonstiff problems in Examples~\ref{ex1} whose exact solution is known and in Examples~\ref{ex2}, whose exact solution is unknown, is just to verify the order 4 of the integrator $\mathtt{exprb42N}$ (satisfying the classical order conditions). However, we also display the order plots of the stiff integrator $\mathtt{exprb42}$ in Fig.~\ref{fig1} (in a double-logarithmic diagram). In this experiment, we use constant step sizes that correspond to the number of time steps that are $N=64$, 128, 256, 512. The diagrams clearly shows a perfect agreement with Theorem~\ref{theorem2.1}. It is observed that with the same number of time steps $\mathtt{exprb42}$ even gets a bit more accuracy than $\mathtt{exprb42N}$ for Example~\ref{example1}. For Example~\ref{example2} both integrators give almost identical results.
\begin{figure}[H]
\centering
\begin{tabular}{cc}
\epsfig{file=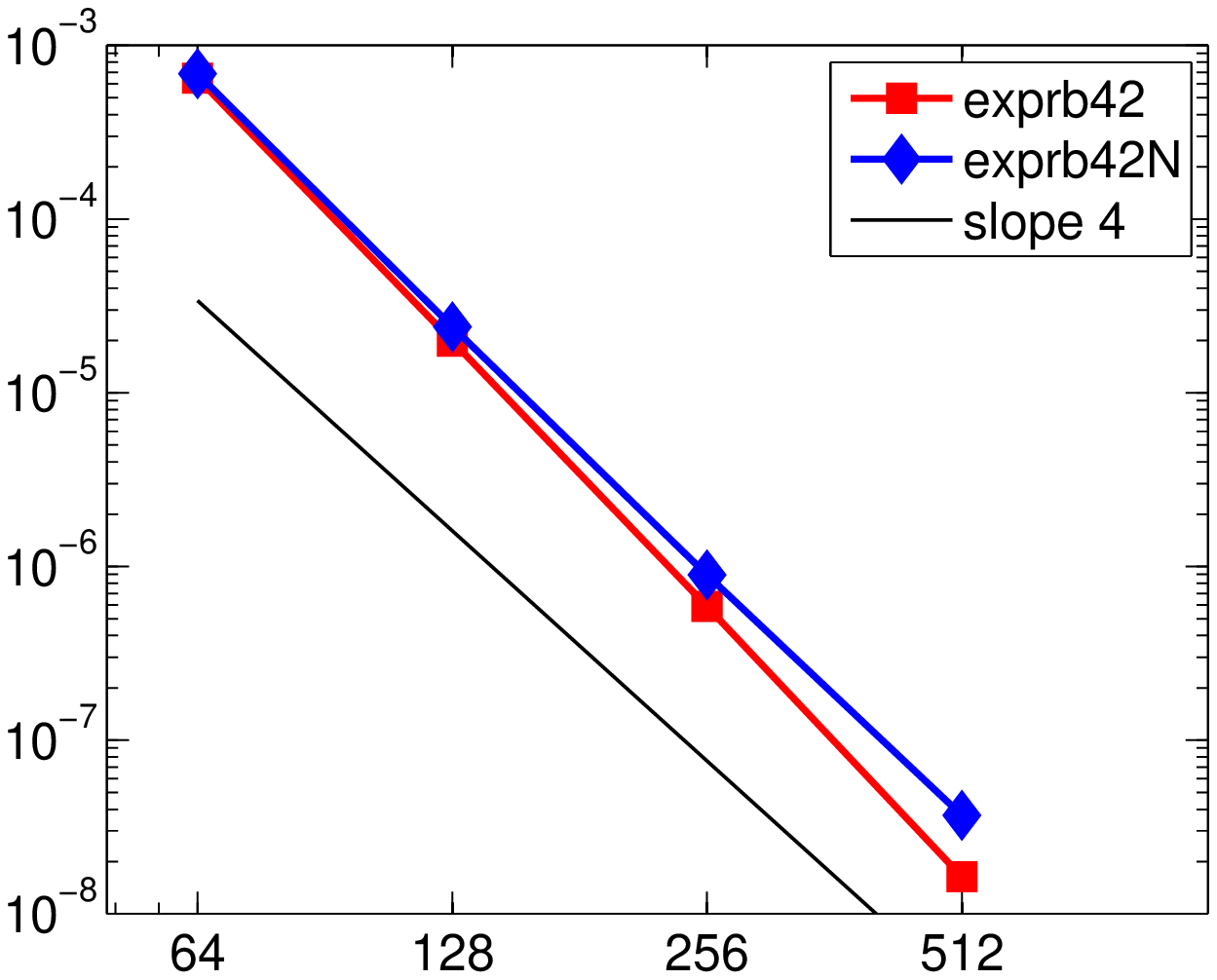,width=0.47\linewidth,clip=}

\epsfig{file=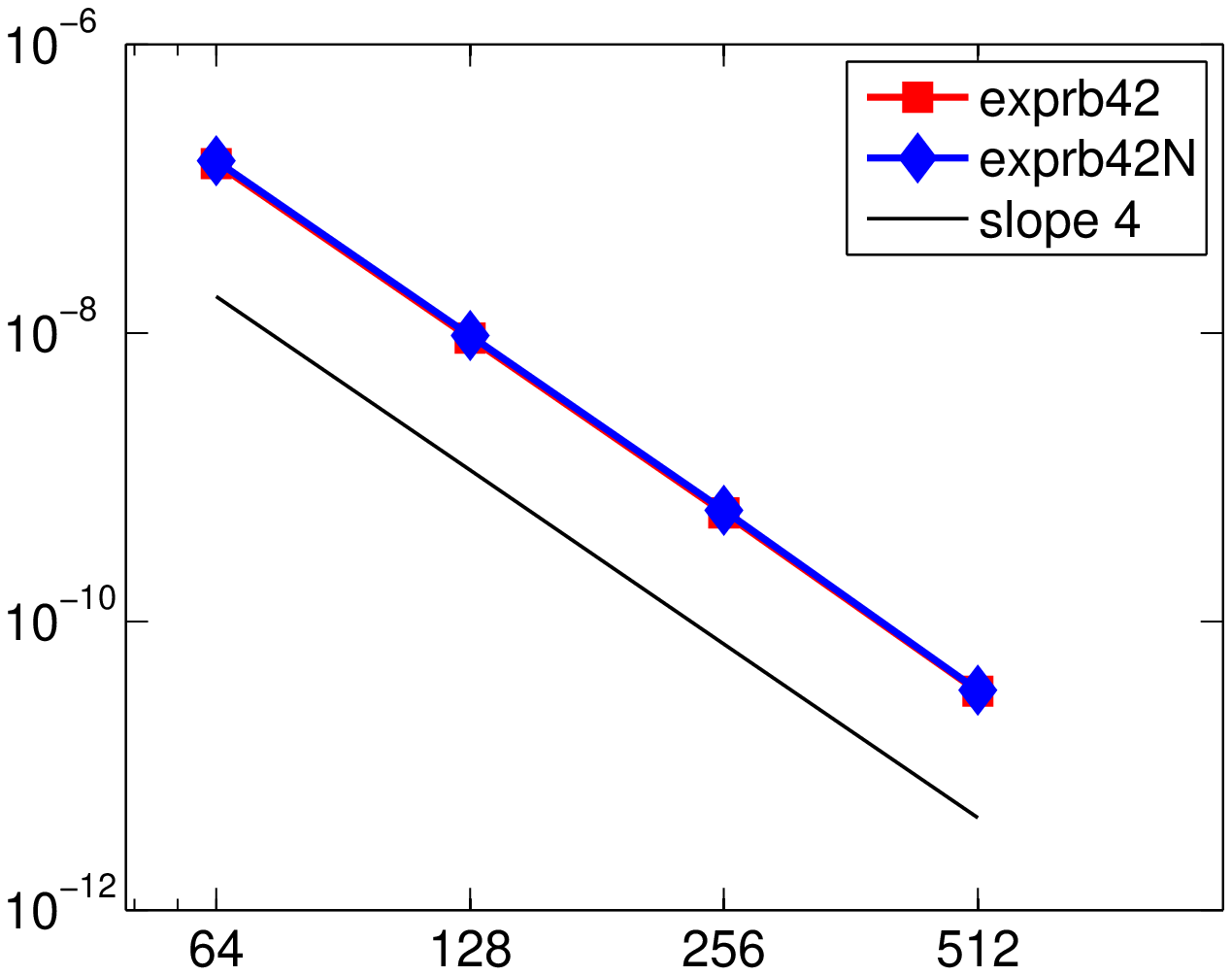,width=0.47\linewidth,clip=}  
\end{tabular}
\vspace{-6mm}
\caption{\label{fig1} Order plots of $\mathtt{exprb42N}$ and $\mathtt{exprb42}$ when applied to Example~\ref{example1} (left) and Example~\ref{example2} (right). The errors at time $t=10$ (left) and $t=2$ (right) are plotted as functions of the number of time steps $N$. For comparison, a straight line with slope 4 is added.}
\end{figure}
For the two stiff problems in Examples~\ref{ex3} and \ref{ex4}, we also check the sharpness of the error bound given in Theorem~\ref{th4.1}. Since the problems are stiff, it is interesting to compare the two new 2-stage fourth-order explicit integrators with the 2-stage Gauss--Legendre scheme-the only existing class of 2-stage fourth-order method (see \cite{HW96}). We will call it as $\mathtt{Gauss42}$ for the rest of the paper. Since $\mathtt{Gauss42}$ is an implicit Runge--Kutta method, we use the simplified Newton iterations as suggested in \cite{HW96} for its implementation. 
\begin{figure}[H]
\centering
\begin{tabular}{cc}
\epsfig{file=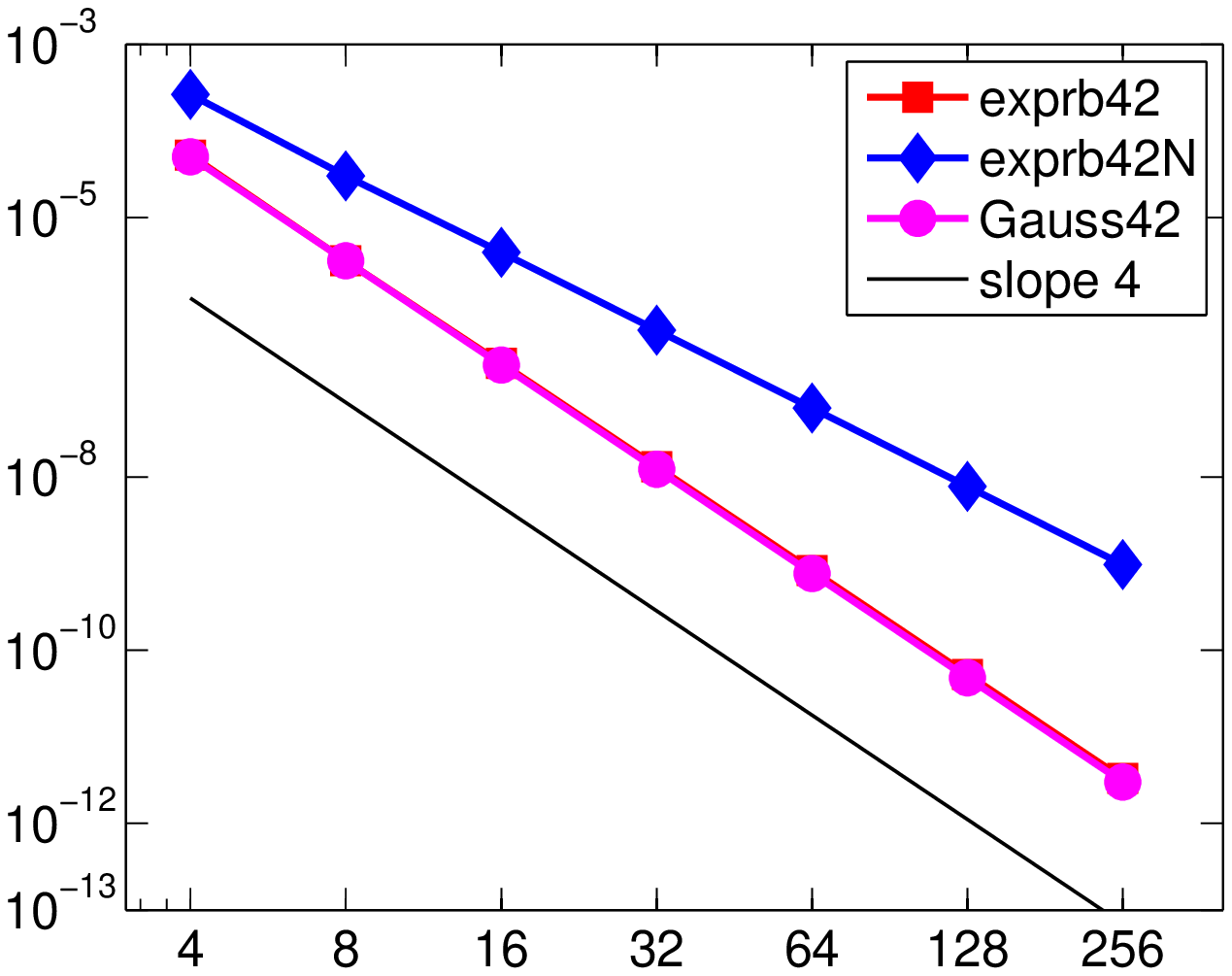,width=0.47\linewidth,clip=}

\epsfig{file=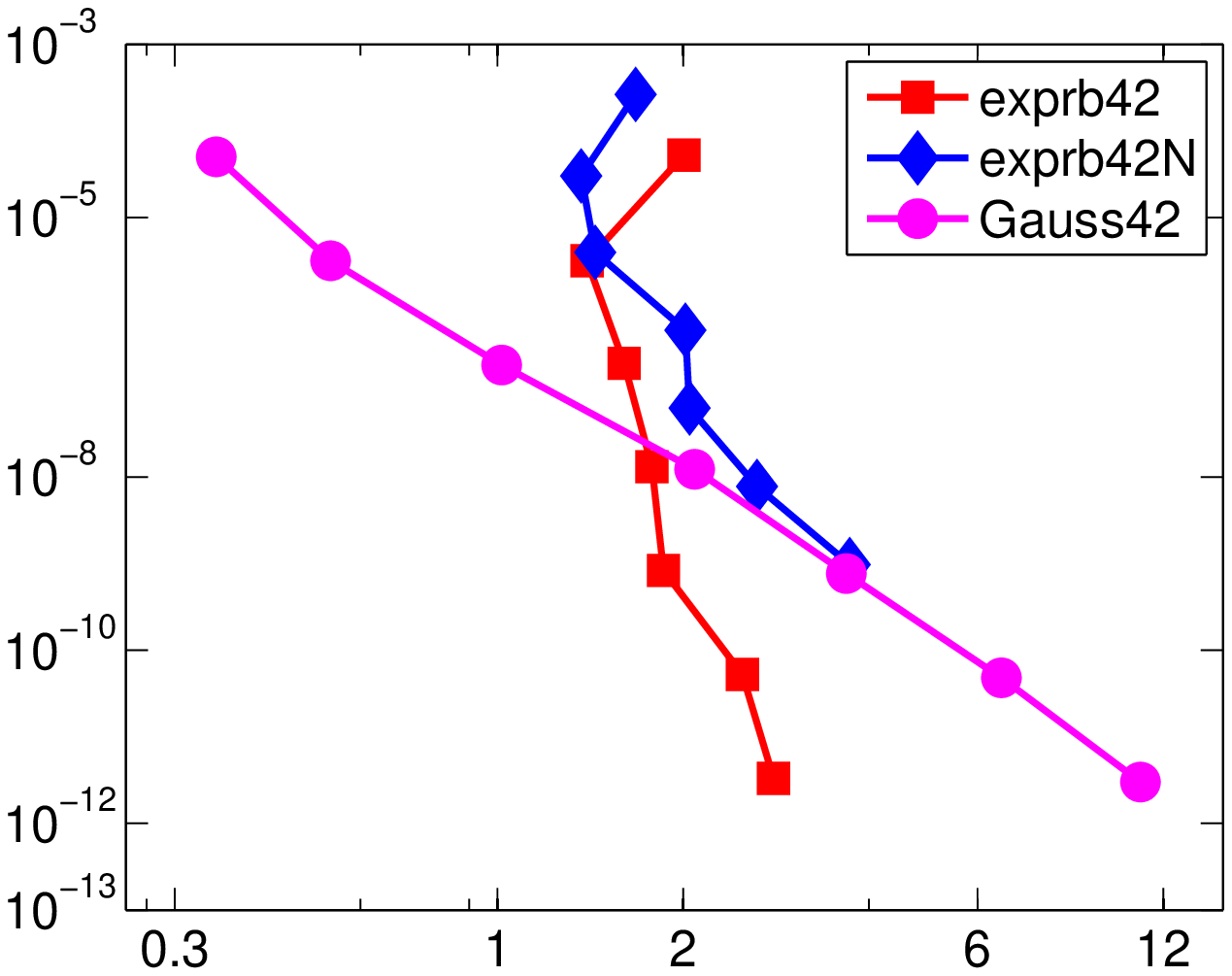,width=0.47\linewidth,clip=}  
\end{tabular}
\vspace{-6mm}
\caption{\label{fig2} Order plots (left) and total CPU times (right) of $\mathtt{exprb42N}$, $\mathtt{exprb42}$, and $\mathtt{Gauss42}$ when applied to Example~\ref{example3}. The errors at time $t=1$ are plotted as functions of the number of time steps $N=4,8,16,32,64,128,256$ (left) and the total CPU time in second (right). For comparison, a straight line with slope 4 is added.}
\end{figure}
As seen from the left diagram in Fig.~\ref{fig2}, while $\mathtt{exprb42N}$ suffers from order reduction when applied to the very stiff problem in Example~\ref{example3} , the two stiff solvers $\mathtt{exprb42}$ and $\mathtt{Gauss42}$ achieve perfectly order 4 and give almost identical global errors for a given number of time steps. In the right diagram we plot the total CPU time versus global error. It turns out that $\mathtt{exprb42}$ is the fastest one  for more stringent global error tolerances (much faster than $\mathtt{Gauss42}$). Moreover, as the number of time steps increases, the CPU time of $\mathtt{exprb42}$ does not increase much while $\mathtt{Gauss42}$ increases CPU time with rate in an approximately linear manner. This can be explained as $\mathtt{Gauss42}$ requires solving a nonlinear system of equations at every step.

In Fig.~\ref{fig3}, we again use constant step sizes (corresponding to the number of time steps $N=32,64,128,256, 512$) to verify the achieved orders of the two new integrators when applied to Example~\ref{example4}. Along with $\mathtt{Gauss42}$ we also added the best fourth-order exponential Rosenbrock scheme, $\mathtt{pexprb43}$, which requires 3 stages (see \cite{LO16}) to this comparison. The left precision diagram clearly confirms that all integrators are indeed of order 4, meaning that $\mathtt{exprb42N}$ does not suffer from order reduction for this mildly stiff problem.  In addition, we see that the three fourth-order exponential integrators even offer more accuracy than $\mathtt{Gauss42}$ for a given number of time steps.  For this two-dimensional problem, the right precision diagram indicates a huge computational saving of the three exponential schemes over $\mathtt{Gauss42}$. This is again due to the implicitness of scheme $\mathtt{Gauss42}$. Furthermore, it is observed that both $\mathtt{exprb42N}$ and $\mathtt{exprb42}$ are a bit faster than $\mathtt{pexprb43}$.
\begin{figure}[H]
\centering
\begin{tabular}{cc}
\epsfig{file=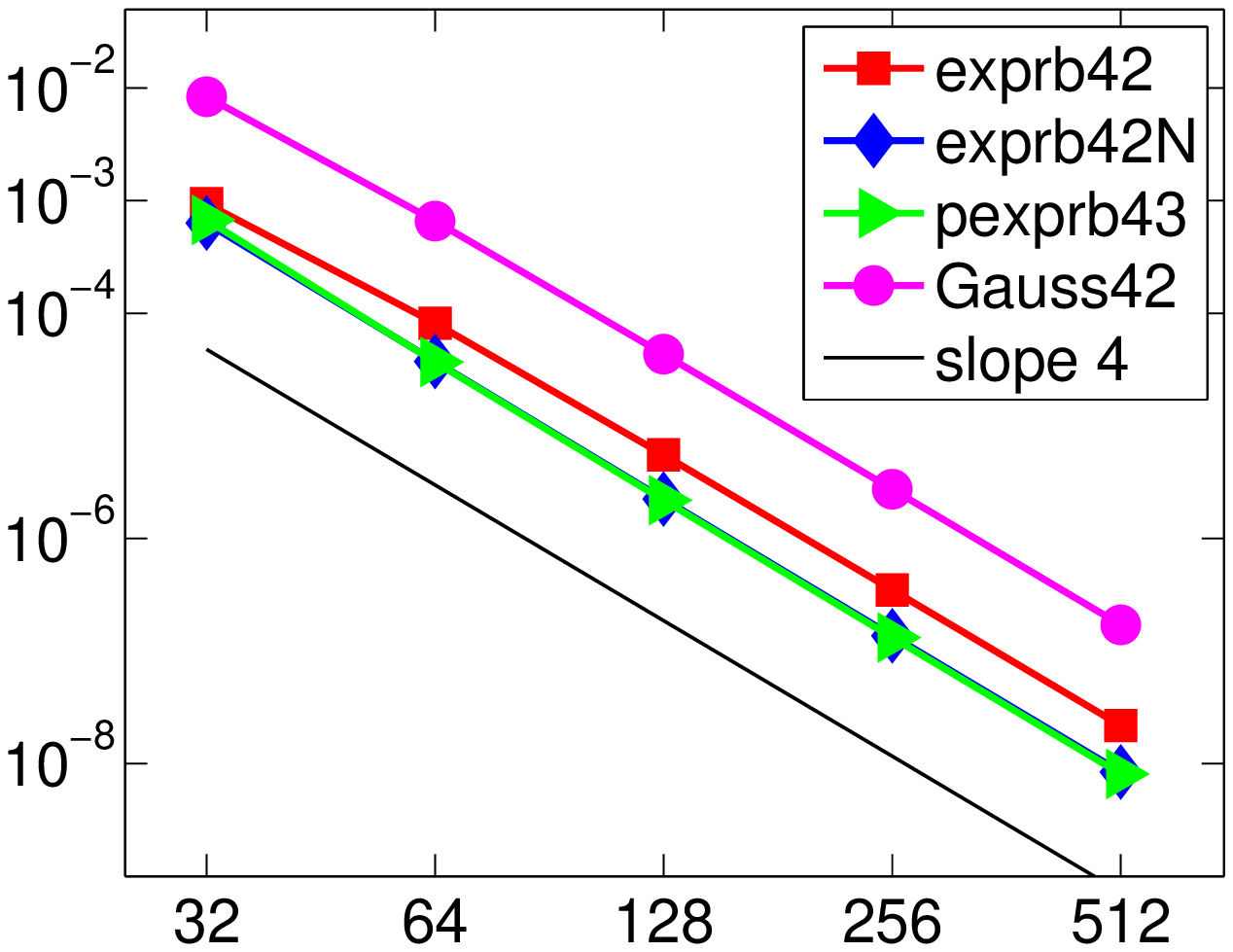,width=0.47\linewidth,clip=}
\epsfig{file=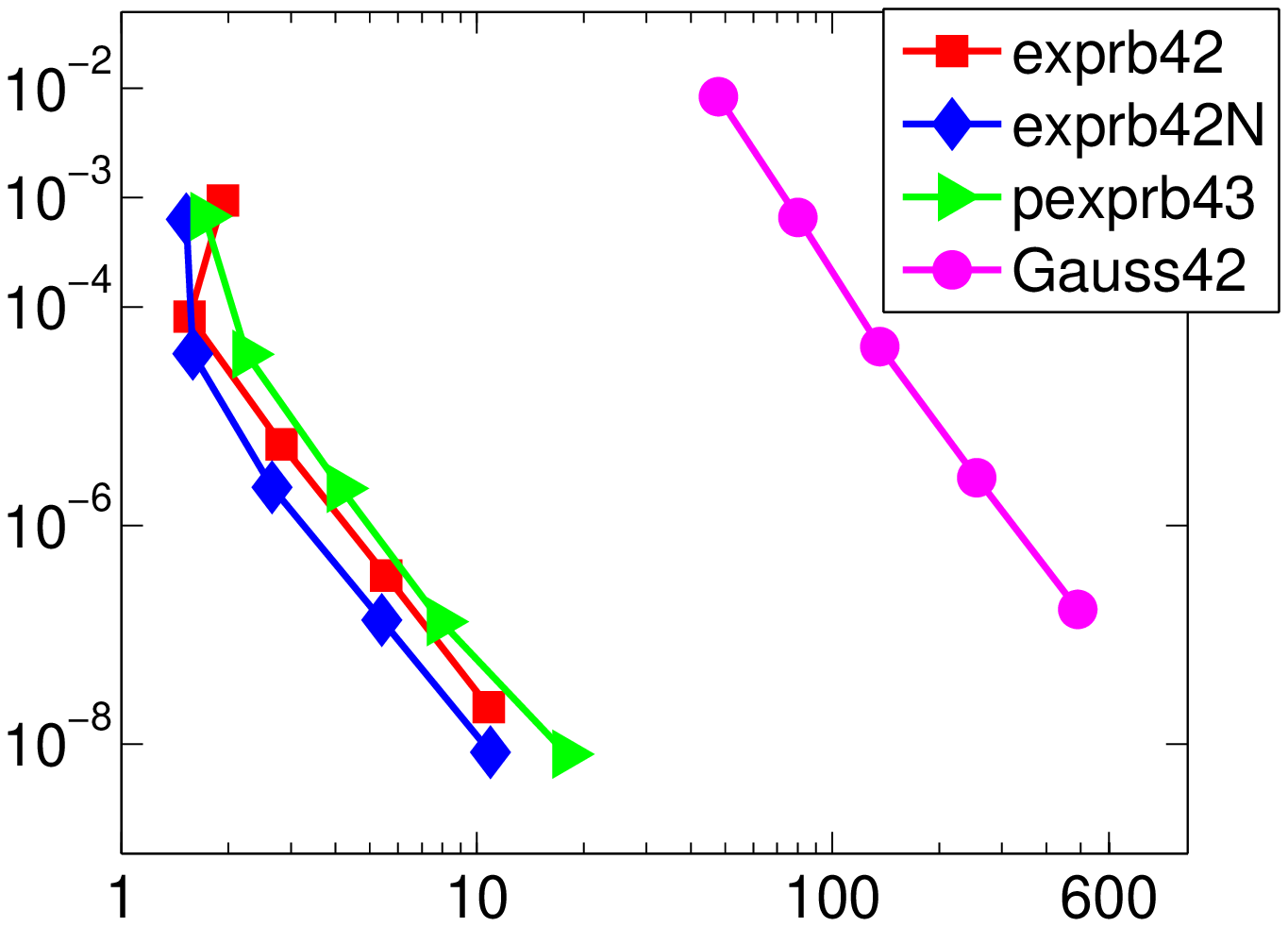,width=0.47\linewidth,clip=}  
\end{tabular}
\vspace{-6mm}
\caption{\label{fig3} Order plots (left) and total CPU times (right) of $\mathtt{exprb42N}$, $\mathtt{exprb42}$, $\mathtt{pexprb43}$, and $\mathtt{Gauss42}$ when applied to Example~\ref{example4}. The errors at time $t=0.08$ are plotted as functions of the number of time steps $N=32,64,128,256, 512$ (left) and the total CPU time in second (right). For comparison, a straight line with slope 4 is added.}
\end{figure}
Next, we implement the new integrators using variable step sizes codes. In Fig.~\ref{fig4}, using the same tolerances ATOL $=$ RTOL $=10^{-4}, 10^{-4.5},\ldots,10^{-6}$ we plot the achieved accuracy as a function of the required  number of time steps. The results are compared again with those of  $\mathtt{pexprb43}$.
\begin{figure}[H]
\centering
\begin{tabular}{cc}
\epsfig{file=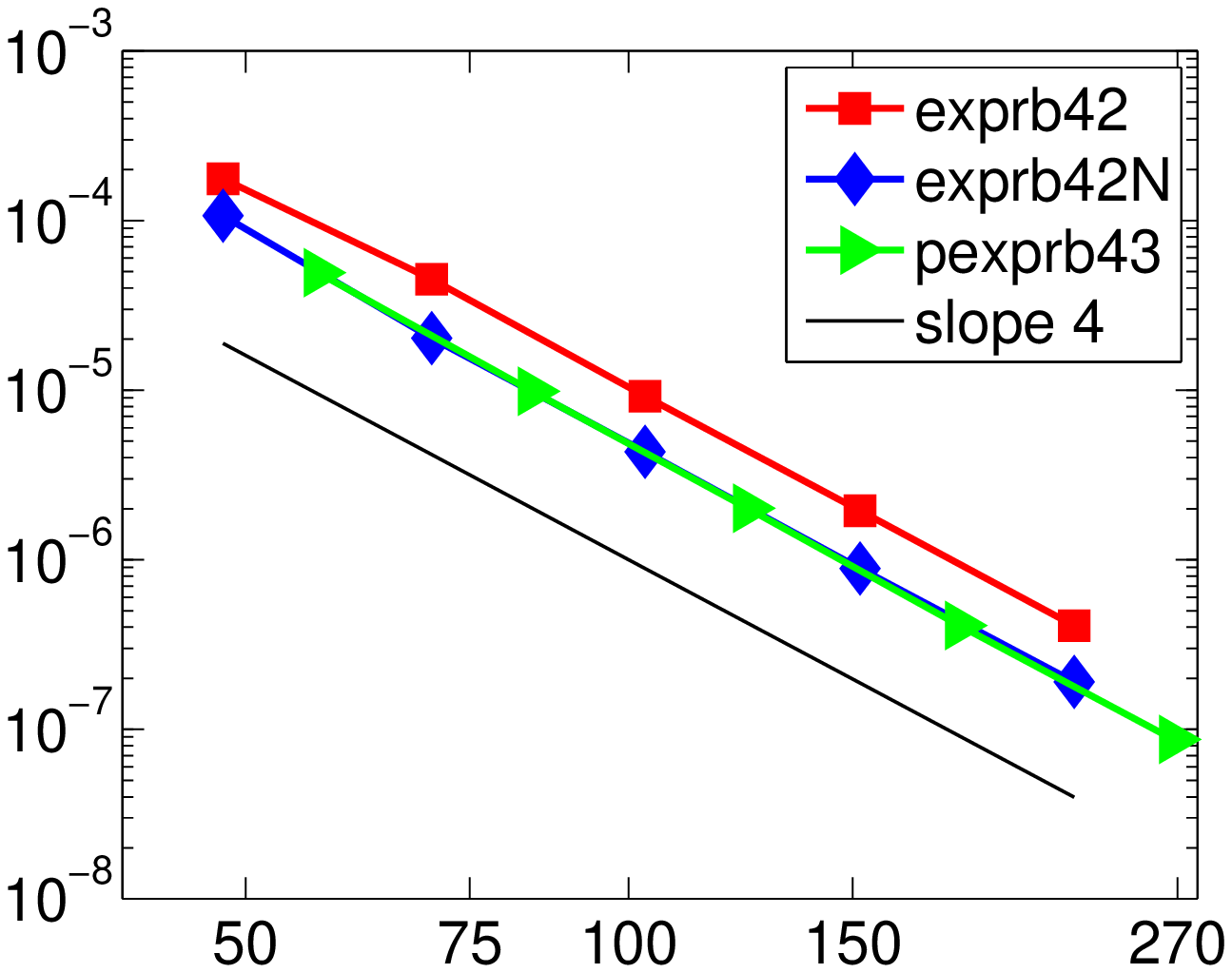,width=0.47\linewidth,clip=}
\epsfig{file=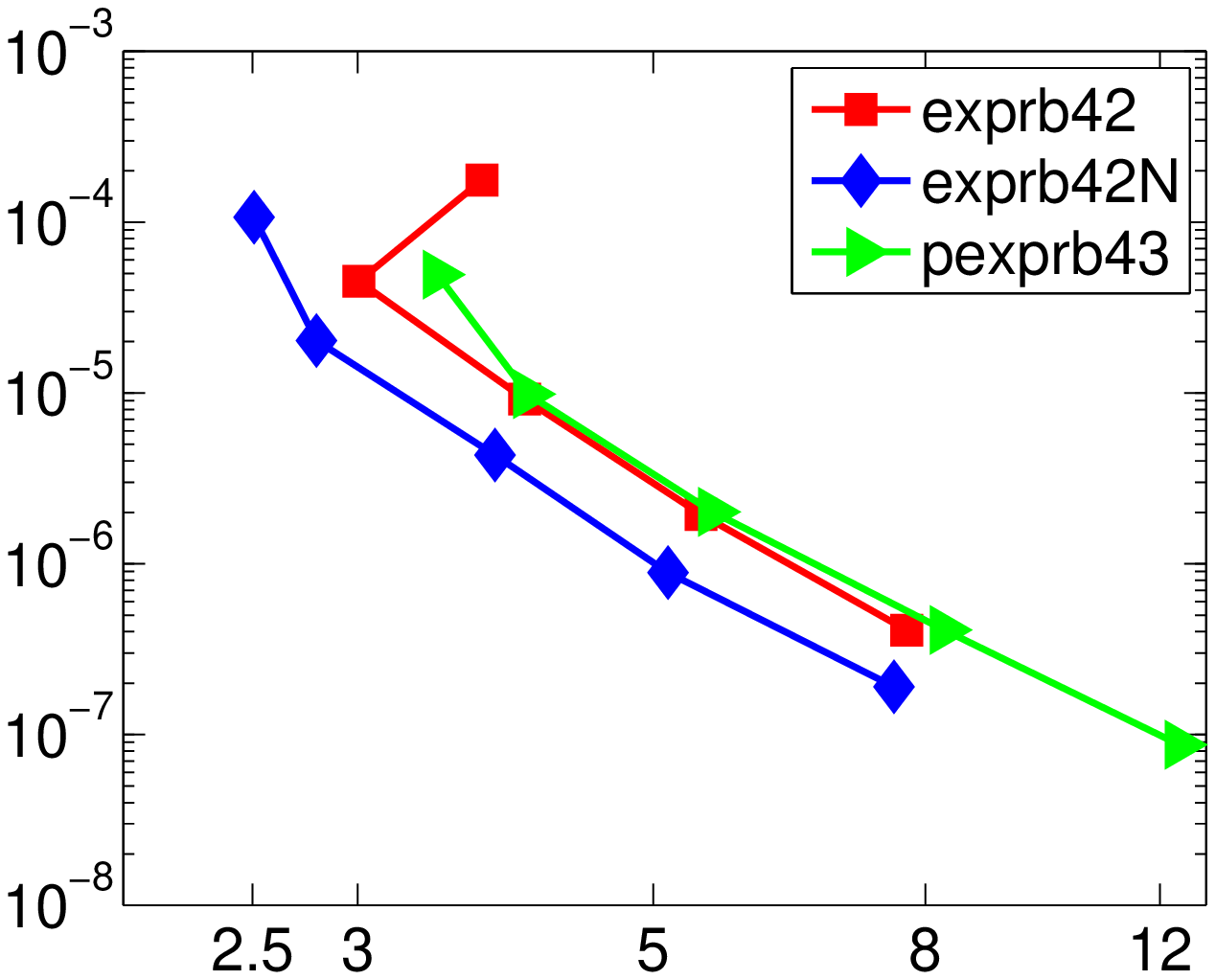,width=0.47\linewidth,clip=}  
\end{tabular}
\vspace{-6mm}
\caption{\label{fig4} Number of time steps versus accuracy (left) and the total CPU time versus accuracy (right) for the advection-diffusion-reaction Example~\ref{example4} for $t=0.08$. The errors are measured in a discrete maximum norm.}
\end{figure}
 The precision diagrams in Fig.~\ref{fig4} indicates that $\mathtt{pexprb43}$ gets a bit more accuracy but takes more number of time steps as well as requires more CPU time than $\mathtt{exprb42N}$, $\mathtt{exprb42}$. This observation is fairly comparable with the experiments using constant step sizes in Fig.~\ref{fig3}. 
\begin{figure}[H]
\begin{center}
\includegraphics[scale=0.45]{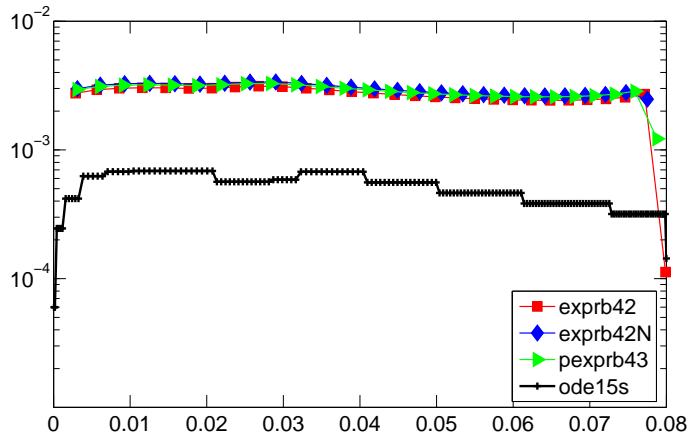}
\end{center}
\vspace{-9mm}
\caption{\label{fig5} Time versus step sizes for the advection-diffusion-reaction Example~\ref{example4} for an accuracy of about 0.001 at $t=0.08$.}
\end{figure}
Finally, we fix a final accuracy of about 0.001 at $t=0.08$ (by choosing appropriate tolerances) for integrators $\mathtt{exprb42N}$, $\mathtt{exprb42}$, $\mathtt{pexprb43}$ as well as a well-established and widely-use code-the stiff solver $\mathtt{ode15s}$ in order to compare their chosen step sizes. As seen from
Fig.~\ref{fig5}, the new integrators $\mathtt{exprb42N}$ and $\mathtt{exprb42}$ use about the same steps as $\mathtt{pexprb43}$ (28-30 steps) and take much larger time steps compared to $\mathtt{ode15s}$ (165 steps). Overall, we conclude that both $\mathtt{exprb42N}$ and $\mathtt{exprb42}$ perform quite well and they certainly beat the implicit methods such as $\mathtt{Gauss42}$ and $\mathtt{ode15s}$ for the advection-diffusion-reaction Example~\ref{example4}.


\bibliographystyle{elsarticle-num}
\bibliography{references_twostageExpint4}

\end{document}